\newtheorem{theorem}{Theorem}[section]
\theoremstyle{definition}
\theoremstyle{remark}
\numberwithin{equation}{section}
\newtheorem{asmpt}[theorem]{Assumption}
\crefname{asmpt}{\textup{Assumption}}{\textup{Assumptions}}
\crefname{theorem}{\textup{Theorem}}{\textup{Theorems}}
\crefname{lemma}{\textup{Lemma}}{\textup{Lemmas}}
\crefname{proposition}{\textup{Proposition}}{\textup{Propositions}}
\crefname{cor}{\textup{Corollary}}{\textup{Corollaries}}
\crefname{definition}{\textup{Definition}}{\textup{Definitions}}
\crefname{remark}{\textup{Remark}}{\textup{Remarks}}
\crefname{figure}{\textup{Figure}}{\textup{Figures}}
\crefname{table}{\textup{Table}}{\textup{Tables}}
\crefname{algorithm}{\textup{Algorithm}}{\textup{Algorithms}}
\crefname{section}{\textup{Section}}{\textup{Sections}}
\numberwithin{equation}{section}
\numberwithin{algorithm}{section}
\numberwithin{figure}{section}
\let\@citexOld\@citex
\def\@citex[#1]#2{\textup{\@citexOld[#1]{#2}}}
\newcommand{\bB}{{\boldsymbol B}}
\newcommand{\bU}{{\boldsymbol U}}
\newcommand{\bY}{{\boldsymbol Y}}
\newcommand{\ba}{{\boldsymbol a}}
\newcommand{\bb}{{\boldsymbol b}}
\newcommand{\bc}{{\boldsymbol c}}
\newcommand{\bh}{{\boldsymbol h}}
\newcommand{\bp}{{\boldsymbol p}}
\newcommand{\bq}{{\boldsymbol q}}
\newcommand{\br}{{\boldsymbol r}}
\newcommand{\bv}{{\boldsymbol v}}
\newcommand{\bx}{{\boldsymbol x}}
\newcommand{\bz}{{\boldsymbol z}}
\newcommand{\Js}{{\mathscr J}}
\newcommand{\Dc}{{\mathcal D}}
\newcommand{\Lc}{{\mathcal L}}
\newcommand{\Nc}{{\mathcal N}}
\newcommand{\Sc}{{\mathcal S}}
\newcommand{\Eb}{{\mathbb E}}
\newcommand{\Nb}{{\mathbb N}}
\newcommand{\Rb}{{\mathbb R}}
\newcommand{\xmax}{{x_{\max}}}
\newcommand{\vmax}{{v_{\max}}}
\newcommand{\vd}{{\bar v_{\mathrm d}}}
\newcommand{\xd}{{\bar x_{\mathrm d}}}
\newcommand{\zd}{{\bar \bz_{\mathrm d}}}
\newcommand{\zkappajk}{\bz^\kappa_{j}}
\newcommand{\Deltatkappa}{\Delta t^\kappa}
\newcommand{\blambda}{{\boldsymbol \lambda}}
\newcommand{\bmu}{{\boldsymbol \mu}}
\newcommand{\bphi}{{\boldsymbol \phi}}
\newcommand{\rmd}{{\, \mathrm{d}}}
\newcommand{\tol}{{\mathrm{tol}}}
\renewenvironment{algorithm}
{%
	\bigskip
		\begin{center}
			\refstepcounter{algorithm}
			\hrule height.8pt depth0pt \kern2pt
			\renewcommand{\caption}[2][\relax]{
				{\raggedright\textbf{\fname@algorithm~\thealgorithm} ##2\par}%
				\ifx\relax##1\relax 
				\addcontentsline{loa}{algorithm}{\protect\numberline{\thealgorithm}##2}%
				\else 
				\addcontentsline{loa}{algorithm}{\protect\numberline{\thealgorithm}##1}%
				\fi
				\kern2pt\hrule\kern2pt
			}
	}
	{%
		\kern2pt\hrule\relax
		\bigskip
	\end{center}
}
\begin{document}

\title{Adjoint-based optimal control of jump-diffusion processes}

\author[Jan Bartsch, Alfio Borz\`i, Gabriele Ciaramella, and Jan Reichle]{}

%
%
%

\thanks{The work was partially funded by Deutsche Forschungsgemeinschaft (DFG) within SFB 1432, Project-ID 425217212, and the Young Scholar Fund by of the University of Konstanz}

\thanks{$^*$Corresponding author: Jan Bartsch}

\subjclass[2020]{93E03, 
	93D15, 
	65C05 
 }

\keywords{Optimization of SDEs, Jump-diffusion process, First-order optimality conditions, Monte Carlo methods, Stochastic gradient methods}

\date{\today \, (submitted)}


\begin{abstract}
	Stochastic differential equations (SDEs) using jump-diffusion processes describe many natural phenomena at the microscopic level.
	Since they are commonly used to model economic and financial evolutions, the calibration and optimal control of such processes are of interest to many communities and have been the subject of extensive research.
	In this work, we develop an optimization method working at the microscopic level.
	This allows us also to reduce computational time since we can parallelize the calculations and do not encounter the so-called curse of dimensionality
	that occurs when lifting the problem to its macroscopic counterpart using partial differential
	equations (PDEs). 
	Using a discretize-then-optimize approach, we derive an adjoint process
	and an optimality system in the Lagrange framework. 
	Then, we apply Monte Carlo methods to solve all the arising equations. 
	We validate our optimization strategy by extensive numerical experiments.
	We also successfully test a optimization procedure that avoids storing the information of the forward equation.
\end{abstract}

\maketitle

\centerline{\scshape
	Jan Bartsch$^{{\href{mailto:jan.bartsch@uni-wuerzburg.de}{\textrm{\Letter}}}*1}$,
	Alfio Borz\`i$^{{\href{mailto:alfio.borzi@uni-wuerzburg.de}{\textrm{\Letter}}}2}$,
	Gabriele Ciaramella $^{{\href{mailto:gabriele.ciaramella@polimi.it}{\textrm{\Letter}}}3}$,
	and Jan Reichle$^{{\href{mailto:jan.reichle@uni-konstanz.de}{\textrm{\Letter}}}4}$}

\medskip

{\footnotesize
	\centerline{$^1$Institut f\"ur Mathematik, Universit\"at W\"urzburg,}
	\centerline{ Emil-Fischer-Str. 40, 97074 W\"urzburg, Germany}
}

\medskip

{\footnotesize
	\centerline{$^2$Institut f\"ur Mathematik, Universit\"at W\"urzburg,}
	\centerline{ Emil-Fischer-Str. 30, 97074 W\"urzburg, Germany}
}

\medskip

{\footnotesize
	\centerline{$^3$MOX Lab, Dipartimento di Matematica, Politecnico di Milano,}
	\centerline{Piazza Leonardo da Vinci 32, 20133 Milano, Italy}
}

\medskip

{\footnotesize
	\centerline{$^3$Fachbereich f\"ur Mathematik und Statistik, Universit\"at Konstanz,}
	\centerline{Universitätsstraße 10, 78464 Konstanz, Germany}
}

\section{Introduction}

Since many physical phenomena are inherently subject to uncertainties and noise their accurate description is often achieved using stochastic differential equations (SDEs).
A very broad class of SDEs is the one governed by the so-called jump-diffusion processes. 
Their applications range from physics \cite{Bao2019levyFilter_SDEJD, DelCastilloCarreras2004fractional, Jain2021microStochPrandtlTomlison} over economics and finance \cite{BrutiPlaten2007approximation, FramstadOksendal2004SufficientMaximumPrinciple, SachsSchu213GradienComputation_ModelCalibration} to medical imaging \cite{Grenander1994KnowledgeComplexSystems}.
The jump-diffusion processes consist of three parts: 1) a deterministic drift, 2) a stochastic diffusion that is usually modeled as a Brownian motion, and 3) a stochastic jump usually realized using a compound Poisson process.
Suppose that one wants to model and control the state $\bz(t) \in \Rb^d$ of a physical system over a time interval $[0,T]$, $T>0$, then the corresponding SDE is given by
\begin{align}
	\rmd \bz(t) = \ba(\bz(t),u(\bz,t),t)\,\rmd t \, + \,
	\bb(\bz(t),t)\,\rmd \bB(t) \,
	+\,
	\bc(\bz(t^-),t)\, \rmd \bY(t)
	\qquad\qquad
	t \in (0,T].
	\label{eq:SDE_general}
\end{align}
In \eqref{eq:SDE_general}, we denote by $\bB$ a standard $d$-dimensional Brownian motion \cite[Definition 1.1.13]{PlatenBruti2010NumSolSDEFinance} and by $\bY$ a $d$-dimensional compound Poisson process \cite[p. 10]{PlatenBruti2010NumSolSDEFinance}.
The function $u(\bz,t)$ is called the control.
The equation \eqref{eq:SDE_general} is completed with an initial condition $\mathring{\bz}$ such that $\bz(0)=\mathring{\bz}$.

The optimal control of jump-diffusion processes is of interest to many communities and has therefore led to extensive research over many years; see, e.g.,  \cite{Bismut1978IntroOCPStoch, FramstadOksendal2004SufficientMaximumPrinciple, GaviraghiBorzi2016OCP_JD, KushnerDiMasi1978OCP_JD, Kushner2001NumMeth_StOCP} and references therein.
An optimal control problem is given by minimizing a certain objective $J(\bz,u)$ subject to the pair $(\bz,u)$ satisfying \eqref{eq:SDE_general} and there are in principle two approaches to solving optimal control problems governed by \eqref{eq:SDE_general}.
On the one hand, one can lift the problem to the level of partial differential equations (PDEs). 
To this end, one defines a probability density function $f(z,t)$ that contains information on the probability of the state $\bz$ being in a certain configuration $z$ at a certain timestep $t$. 
The PDE that governs the evolution of $f$ is given by the famous Fokker-Planck equation \cite{GaviraghiBorzi2016OCP_JD} that contains in the case of jump-diffusion processes also an integral part. 
In this setting, one can apply tools from PDE-constrained optimization \cite{Treoltzsch2010OCP_PDE}.
In the case of $\bb \equiv 0$ and $\bc \equiv 0$, the first author considered optimal control problems for the corresponding partial differential equation, i.e. the Liouville equation.
In \cite{Bartsch2019Theoretical,Bartsch2021Numerical} deterministic numerical methods were applied to solve the arising equations.
In \cite{Bartsch2020OCPKS,Bartsch2021MOCOKI,BartschKnopfScheurerWeber2023controllingVPMC}, the author considered a Monte-Carlo framework to solve the arising equations.

On the other hand, it is possible to work at the microscopic level and directly characterize the control using stochastic differential equations. 
Here, no probability density functions come into play and hence no PDEs must be solved.

In the PDE-based approach, the following difficulties occur. 
First, the complexity of the problem grows exponentially with the dimension of the state $\bz \in \Rb^d$. This is the so-called \emph{curse-of-dimensionality} from which all PDE-based approaches suffer.
Additionally, for jump-diffusion processes the Fokker-Planck equation becomes an integro-differential equation, requiring more advanced solution techniques than in standard PDE optimization.

The problems of the PDE approach can be circumvented to some extent when remaining on the microscopic level.
First, an increase in the state dimensions does not lead to an exponential growth of computational complexity. 
If one additionally supposes that each state component is independent of the others then it is possible to parallelize the computations with respect to the dimensions.
Furthermore, the additional presence of the Poisson process does not lead to a significantly different structure of the solution method.
In fact, the jump part can be approximated by a piecewise deterministic process for which the jump times have to be sampled according to a Poisson distribution.

The new contribution in this work is that we construct an optimization strategy working directly on the microscopic level. 
In particular, we do not have to assemble probability density functions that turn out to be a time-consuming bottleneck in hybrid (macroscopic-microscopic) methods \cite{Bartsch2021MOCOKI, Bartsch2020OCPKS}.
Furthermore, in contrast to \cite{BartschDenkVolkwein2023adjointCalibrationSDE}, we do not consider only Brownian motion, but also jump processes determined by a Poisson process.
The structure of our control is chosen in such a way that it realizes a feedback-like control.
More specifically, we consider the control $u$ as consisting of a fixed dependence on the state variable given by the smooth function $\boldsymbol \Phi(\bz)$ and a square integrable time-dependent function $\bmu(t)$. 
This time-dependent function is considered considered to be our control mechanism that we want to optimize.
Hence, the control can be written given as
\begin{align*}
	u(\bz,t) = \bmu(t)\boldsymbol{\Phi}(\bz).
\end{align*}

Furthermore, we use a tracking-type cost functional with a Gaussian-like exponential function.
This is motivated by the fact, that we want to have bounded functions with bounded derivatives in the optimality system of our framework later on.
Furthermore, this setting is then close to previous works \cite{Bartsch2021MOCOKI,BartschKnopfScheurerWeber2023controllingVPMC}.

This work is organized as follows. 
In \cref{sec:OCP}, we formulate and analyze the optimal control problem, introducing the system dynamics and the corresponding cost functional. 
Assumptions necessary for well-posedness are also discussed.
In \cref{sec:FullyDiscreteProblem}, we present the fully discretized problem. 
Furthermore, we analyze the convergence of discrete approximations to the continuous problem.
In \cref{sec:OptSys}, we derive the optimality system using an adjoint-based approach, including the equations necessary for characterizing optimal solutions.
In \cref{sec:NumImpl}, we explain the numerical implementation, including the discretization scheme, the design of shape functions, the handling of jump processes, and the optimization procedure.
In \cref{sec:NumExp}, we present numerical experiments to validate the proposed method. 
Specific test cases include centering particles, stabilization, and following a time-dependent trajectory with systems of coupled and uncoupled particles.
Moreover, we successfully test here an idea that avoids the storing of the forward trajectories.
A section of conclusion finishes this work.

\section{Formulation of the optimal control problem}
\label{sec:OCP}

We consider a finite time horizon $[0,T]$ with $T>0$ and the a system of $N \in \Nb$ particles in the phase space $\Rb^{d_z}$ consisting of velocity and position space. 
Hence, we have the whole state space $\Rb^d$ with $d\coloneqq N\,d_z$. In the following, we consider a two-dimensional phase space, i.e. $d_z=2$.
We appoint each particle with position and velocity, i.e. $\bz_j = (\bx_j,\bv_j)$ for $j \in [N]$, where we use the notation $[N]\coloneqq \{1,\ldots,N\}$.

Our goal is to design a control field capable of driving the mean of an initial configuration $\mathring{\bz}$ of particles to a desired configuration or follow a desired trajectory $\zd(t)$ on average.
For this purpose, we define the structure of the deterministic coefficient as:
\begin{align}
	\ba(\bz(t),u(\bz,t),t) = \mathrm{e} \otimes \binom{0}{u(\bz,t)} + H\bz, 
	\qquad\qquad
	H \in \Rb^{N \, d_z \times N \, d_z}
	\label{eq:deterministic_structure},
\end{align}
where we define $\mathrm{e}=(1,\ldots,1) \in \Rb^{N\,d_z}$ and denote by $\otimes$ the standard Kronecker product, i.e.
\begin{align}
	\mathrm{e} \otimes \binom{0}{u(\bz,t)}
	=
	\Big(0,u(\bz_1,t),0,u(\bz_2,t),\ldots,0,u(\bz_N,t)\Big)^\top \in \Rb^{N \, d_z}.
	\label{eq:definition_otimesJ}
\end{align}
The symbol $^\top$ denotes the transpose.
By the shape of the coefficient of the deterministic part given in \eqref{eq:deterministic_structure}, the control can be interpreted as force acting in the velocity component. 
The matrix $H$ accounts for the dynamic of the particles in the uncontrolled case. 
For example, it can model Hook's law.

To measure the effectiveness of our control mechanism, we consider the cost functional
\begin{align}
	J(\bz,u) = \Eb\left[ \int_0^T \frac{1}{N} \sum_{j=1}^{N} \Js(\bz_j,t) \rmd t + \frac{\alpha}{2} \| u \|_{L^2(\Rb^d \times [0,T])}^2  \right],
	\label{eq:continous_objective}
\end{align}
where $\alpha>0$ is the control weight and states the relative importance of the cost of the control in the optimization problem.
The symbol $\Eb$ denotes the expectation value with respect to the stochastic variable.
On the functional $\Js$ in \eqref{eq:continous_objective}, we impose the following
\begin{asmpt}
	\label{asmpt:functional}
	\begin{enumerate}[label=\textup{\arabic*)}]
		\item The functional $\Js: \Rb^d \rightarrow \Rb$ is lower semicontinuous and bounded from below.
		\item The functional $\Js$ is continuously differentiable, i.e. $\Js \in C^1(\Rb^d \times [0,T])$.
	\end{enumerate}
\end{asmpt}

\bigskip

We consider the following form of the control for some $L \in \Nb$ that realizes a separation in phase space and time
\begin{align}
	u: \Rb^d \times [0,T] \rightarrow \Rb,
	\qquad\qquad
	u(\bz(t),t) = \sum_{\ell=1}^L \bmu_\ell(t) \, \bphi_\ell(\bz(t)),
	\label{eq:control_structure}
\end{align}
with $\bmu_\ell: [0,T] \rightarrow \Rb$ for $\ell \in [L]$.
In this work, the shape functions $\bphi_\ell:\Rb^d \rightarrow \Rb$ in phase space are given and $\bmu_\ell$ are the optimization variables.
We assume the following structure of $\bphi_\ell$ with given shape functions $\bphi^x_\ell$ and $\bphi^v_\ell$ in position and velocity, respectively,
\begin{align*}
	\bphi_\ell(\bz(t)) \coloneqq  \bphi^x_\ell(x(t)) \, \bphi^v_\ell(v(t)).
\end{align*}

We define $\bphi \coloneqq (\bphi_1,\ldots,\bphi_L)$ and $\bmu = (\bmu_1,\ldots,\bmu_L)$.
Then, we can write \eqref{eq:control_structure} as
\begin{align}
	u(\bz(t)) =  \bmu(t)^\top \bphi(\bz(t)).
\end{align}

We define
\begin{align}
	\Psi(\bz,\bmu) \coloneqq \mathrm{e} \otimes   \binom{0}{ \bmu^\top \bphi(\bz)}.
\end{align}
With this, we can write the coefficient $\ba$ in \eqref{eq:deterministic_structure} as 
\begin{align}
	\ba(\bz,u,t) = \Psi(\bz,\bmu)  + H \bz.
	\label{eq:compact_forward}
\end{align}

\bigskip
For the well-posedness of the state equation \eqref{eq:SDE_general}, we need the following assumptions  (cf. \cite{Sobczyk2013SDEs} and \cite{HighamKloeden2006ConvergenceStabilityJDP}).
\begin{asmpt}
	\label{asmpt:existence_uniqueness_SDE}
	The coefficients $\ba:\Rb^d \times \Rb \times [0,T] \rightarrow \Rb^d$, $\bb:\Rb^d \times [0,T]\rightarrow \Rb^d$, $\bc:\Rb^d \times [0,T] \rightarrow \Rb^d$ in \eqref{eq:SDE_general} fulfill
	\begin{enumerate}[label=\textup{\arabic*)}]
		\item {(At most quadratic growth)} There exists a constant $L>0$, such that
		\begin{align*}
			|\ba(\bz,u,t)|^2+|\bb(\bz,t)|^2 + |\bc(\bz,t)|^2 \leq L(1+|\bz|^2).
		\end{align*}
		\item {(Lipschitz continuity)} For an arbitrary $R>0$ there exists a constant $C_R>0$ such that for all $|\bz_1|\leq R$ and $|\bz_2| \leq R$
		\begin{multline*}
			|\ba(\bz_1,u,t)-\ba(\bz_2,u,t)|^2 + |\bb(\bz_1,t)-\bb(\bz_2,t)|^2
			\\
			+ |\bc(\bz_1,t)-\bc(\bz_2,t)|^2 \leq C_R|\bz_1-\bz_2|^2
		\end{multline*}
		for all $t\in [0,T]$ and $u \in \Rb$.
		%
	\end{enumerate}
\end{asmpt}

We can now formulate our optimal control problem:
\begin{subequations}
	\begin{align}
		\min_{(\bz,\bmu)} \; &\Eb\left[ \int_0^T \frac{1}{N} \sum_{j=1}^{N} \Js(\bz_j(t),t) \rmd t + \frac{\alpha}{2} \| \bmu(\cdot)^\top \bphi(\bz_j(\cdot)) \|_{L^2([0,T])}^2  \right] \eqqcolon j(\bz,\bmu)
		\\
		\text{s.t. }
		&\begin{cases}
			\rmd \bz(t) = \Big(\Psi(\bz(t),\bmu(t))  + H \bz(t) \Big)\rmd t \, + \,
			\bb(\bz(t),t)\rmd \bB(t) \, 
			\\
			\hspace{6cm} +\, \bc(\bz(t^-),t)  \rmd \bY(t)
			\quad
			t \in (0,T],
			\\
			\bz(0) = \mathring{\bz}.
		\end{cases}
		\label{eq:SDE_constraint}
	\end{align}
	\label{eq:OCP_continuous}
\end{subequations}

We state in the following theorem the existence and uniqueness of solutions to \eqref{eq:SDE_general}.
Notice that $\ba$ in the structure of \eqref{eq:compact_forward} fulfills \cref{asmpt:existence_uniqueness_SDE} for suitable $\bb$, $\bc$.
\begin{theorem}
	Let \cref{asmpt:existence_uniqueness_SDE} hold. 
	Then the equation \eqref{eq:SDE_general} has a unique solution whose almost all sample functions are continuous from the right.
	\label{thm:ExUniSDE}
\end{theorem}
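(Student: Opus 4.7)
The plan is to prove \cref{thm:ExUniSDE} by adapting the classical Picard iteration / contraction argument for It\^o SDEs to the jump-diffusion setting, exploiting the piecewise structure of the compound Poisson process. The statement is a standard result; the two references cited just before the theorem, \cite{Sobczyk2013SDEs} and \cite{HighamKloeden2006ConvergenceStabilityJDP}, give essentially this theorem for coefficients satisfying \cref{asmpt:existence_uniqueness_SDE}, so I would present the proof as a self-contained sketch following those sources.

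First I would exploit that the compound Poisson process $\bY$ has, almost surely, only finitely many jumps in the bounded interval $[0,T]$. Denote these jump times by $0=\tau_0<\tau_1<\tau_2<\ldots<\tau_{K}\le T$. On each interval $[\tau_k,\tau_{k+1})$, equation \eqref{eq:SDE_general} reduces to a continuous It\^o SDE
\[
\rmd\bz(t)=\ba(\bz(t),u(\bz,t),t)\rmd t+\bb(\bz(t),t)\rmd\bB(t),
\]
for which \cref{asmpt:existence_uniqueness_SDE} gives the standard hypotheses (linear growth and local Lipschitz continuity). The classical existence and uniqueness result for It\^o SDEs then applies: one truncates $\ba$ and $\bb$ outside balls of radius $R$ to obtain globally Lipschitz coefficients, constructs the unique solution via Picard iteration / Banach fixed point in the space of adapted $L^2$-processes, and then removes the truncation using a stopping-time argument. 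The at-most-quadratic growth condition ensures a uniform $L^2$-bound on $|\bz(t)|$ through a Gronwall-type estimate, so the localizing stopping times tend to infinity and no explosion occurs in $[\tau_k,\tau_{k+1})$.

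I would then handle the jumps by pasting: at each $\tau_{k+1}$, define
\[
\bz(\tau_{k+1})=\bz(\tau_{k+1}^-)+\bc\bigl(\bz(\tau_{k+1}^-),\tau_{k+1}\bigr)\,\Delta\bY(\tau_{k+1}),
\]
which is well-defined because $\bz(\tau_{k+1}^-)$ exists by construction (left-limits of the continuous solution on the preceding interval) and $\bc$ satisfies the linear growth bound. This value serves as the initial condition for the SDE on $[\tau_{k+1},\tau_{k+2})$, on which the continuous argument is re-applied. Iterating over the finitely many $k\in\{0,1,\ldots,K\}$ produces a solution on all of $[0,T]$ whose sample paths are c\`adl\`ag, in particular right-continuous. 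Uniqueness follows because on each continuous piece the It\^o solution is unique up to indistinguishability, and the jump rule is deterministic given $\bz(\tau_{k+1}^-)$ and $\Delta\bY(\tau_{k+1})$.

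The main obstacle I expect is the usual one in jump-diffusion theory: verifying that the pasting procedure is measurable with respect to the correct filtration (so that the pasted process is adapted and the stochastic integrals are well-defined) and that the growth estimate survives summation over the random number of jumps in $[0,T]$. The latter is controlled by combining the linear-growth bound on $\bc$ with the fact that $\Eb[K]<\infty$ for a compound Poisson process on a bounded interval, yielding an a priori $L^2$-bound on $\sup_{t\le T}|\bz(t)|$ that prevents blow-up across jumps. Once this global bound is in hand, the pasting argument together with uniqueness on the continuous pieces closes the proof.
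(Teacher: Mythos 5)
Your proposal is correct and matches the paper's approach: the paper does not prove \cref{thm:ExUniSDE} itself but defers to \cite[Theorem~3.14]{Sobczyk2013SDEs}, and the interlacing construction you sketch---solving the continuous It\^o SDE between the almost surely finitely many jump times of the compound Poisson process via truncation/Picard iteration under the local Lipschitz and linear-growth hypotheses of \cref{asmpt:existence_uniqueness_SDE}, then pasting at the (stopping-time) jump instants---is precisely the standard argument underlying that cited result. I see no genuine gap.
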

For the proof, we refer to \cite[Theorem 3.14]{Sobczyk2013SDEs}. 
For further information on the functions that are continuous from the right (so-called cadlag functions) $\Dc([0,T])$, we refer the reader to, e.g.,  \cite{Billingsley1999ConvergenceProbMeasure}.
Using \cref{thm:ExUniSDE}, we can introduce the \emph{control-to-state map} $\Sc$:
\begin{align}
	\Sc: L^2(0,T)^L \rightarrow \Dc([0,T])
	\label{eq:Control-to-state-map}
\end{align}
that associates to every $\bmu \in L^2(0,T)^L$ the corresponding solution of \eqref{eq:SDE_constraint}.

Using the control-to-state map, we can introduce the \emph{reduced objective function} $\widehat{j}(\bmu)$ as
\begin{align}
	\widehat{j}(\bmu) \coloneqq j(\Sc(\bmu),\bmu).
	\label{eq:continous_reduced_functional}
\end{align}

We now discuss the existence of optimal controls in our setting.
It is in general not possible to prove existence of optimal controls for general SDE in the strong sense. On the contrary, even for SDEs without jumps one has to consider so-called \emph{relaxed controls} for which the probability space can not be fixed a-priori \cite{Buckdahn2010existenceStochasticControl, YongZhou1999StochasticControls}. In \cite{AhmedCharalambous2013StochMinJumpDiffusion}, the authors deal with the question of existence of optimal stochastic control for a quite general form of stochastic optimal controls problems with a state equation of the form \eqref{eq:SDE_general}.
However, it is possible to characterize an optimal control by certain maximum principles.
In \cref{sec:OptSys}, we use the Lagrange framework to characterize an optimal control function.

\section{Formulation and analysis of the fully discrete problem}
\label{sec:FullyDiscreteProblem}

In this work, we apply the discretize-before-optimize approach.
Hence, we show in the following how to discretize the continuous optimal control problem \eqref{eq:OCP_continuous}.

To approximate the expectation value $\Eb$ in \eqref{eq:OCP_continuous}, we choose $M\gg 1$ realizations for the Brownian motion and the Poisson process.

For each realization $m \in [M]$ and each particle $j \in [N]$, we calculate the $N_j^m$ jump times $\{T_{j,k}^m\}_{k=1}^{N_j^m}$.
Furthermore, we create a uniform grid in time with $N^u_t$ subintervals of size $\Delta \tau = \nicefrac{T}{N^u_t}$ as
\begin{align}
	[0,T] = \cup_{\kappa=0}^{N_t-1} [\tau^\kappa,\tau^{\kappa+1}],
	\qquad
	\tau^k = \kappa \, \Delta \tau.
	\label{eq:Deterministic_timestep}
\end{align}
For each particle $j$ and each realization $m$, we the consider a (possible) different time discretization generated by gridpoints which are given by the union of the $N_t^u$ deterministic splitting and the $N_j^m$ stochastic jump points.
More specifically, the gridpoints are given by
\begin{align}
	\{T_{j,1}^m,\ldots,T_{j,N_j^m}^m\} \; \cup \; \{\tau^0,\ldots,\tau^{N_t^u}\}
	\eqqcolon \{t^0,\ldots,t^{N_t^{j,m}}\}.
\end{align}

Now, we consider the discretization of the time interval into subintervals as
\begin{align}
	[0,T] = \cup_{\kappa=0}^{N^{j,m}_t-1} [t^\kappa,t^{\kappa+1}],
	\qquad\qquad
	\Deltatkappa \coloneqq	t^{\kappa+1} - t^\kappa,
\end{align}
Using the small time-intervals $\Deltatkappa$, we generate the samples of Brownian motion $\Delta \bB^\kappa_{j,m} \sim \Nc(0,\Deltatkappa)$.
In \cref{fig:Time_discretizations_intervals}, we visualize the different time discretizations in our work for a single particle and a single realization. 
In \cref{fig:Time_discretizations_trajectory}, we plot an exemplary trajectory corresponding to the time discretization given in \cref{fig:Time_discretizations_intervals}.

However, for convenience of the implementation, we want to approximate the control to be constant in equidistant intervals of a given length.
For this, we use the partition of the time interval $[0, T]$ into the $N_t^u>1$, equally-spaced subintervals given in \eqref{eq:Deterministic_timestep}.
Within one subinterval, we assume the control to be constant.
In particular, we assume that we have the continuous in time interpolation of the control given by
\begin{align}
	\bmu^h(t) \coloneqq \sum_{\kappa=0}^{N_t^u-1} \bmu^\kappa \chi_{[t^\kappa,t^{\kappa+1}]}(t),
	\qquad\qquad
	t \in [0,T].
	\label{eq:piecewise_constant_control}
\end{align}
%

%

\begin{figure}
	\begin{subfigure}[l]{0.4\textwidth}
		\includegraphics[width=\textwidth]{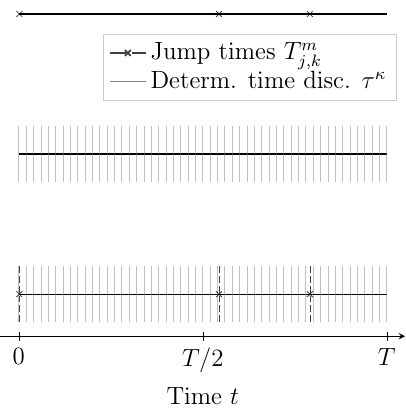}
		\caption{}
		\label{fig:Time_discretizations_intervals}
	\end{subfigure}
	\hfill
	\begin{subfigure}[r]{0.49\textwidth}
		\includegraphics[width=\textwidth]{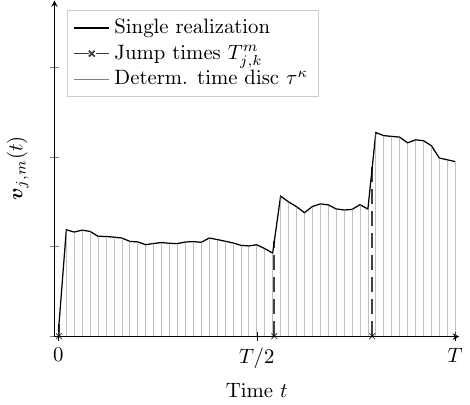}
		\caption{}
		\label{fig:Time_discretizations_trajectory}
	\end{subfigure}
	\caption{Different discretization of the time interval $[0,T]$ and exemplary trajectory. 
		Gray: Deterministic splitting in $N_t^u$ intervals; Black dashed: Stochastic splitting of the time interval into $N_j$ jump times, different for every realization and particle.
		(a) Visualization of stochastic and deterministic parts of the time discretization.
		(b) Visualization of an exemplary SDE trajectory of a single particle in a single realization together with the corresponding discretization. }
	\label{fig:Time_discretizations}
\end{figure}

The fully discrete problem is then given by
\begin{subequations}
	\begin{align}
		\min_{(\bz, \bmu)}
		&\frac{1}{M} \sum_{m=1}^M \frac{1}{N} 
		\sum_{j = 1}^N
		\sum_{k=1}^{N_j^m} \sum_{\kappa = N_t^{j,m,k}}^{N_t^{j,m,k+1}} \Js(\bz^\kappa_{j,m},t^\kappa) \, 
		+ \frac{\alpha}{2} 
		\| \bmu(t^\kappa)^\top \bphi(\bz^\kappa_{j,m}) \|_2^2 \Delta t^k
		\\
		\text{s.t. }
		&\begin{cases}
			\bz^{\kappa+1}_{j,m} = \bz^\kappa_{j,m} + \Delta t \Big( \Psi(\bz^\kappa_{j,m},\bmu(t^\kappa))  + H\bz^\kappa_{j,m}\Big)
			\\
			\hspace{4cm} + \bb(\bz^\kappa_{j,m},t^\kappa) \Delta \bB^\kappa_{j,m}
			\qquad
			\text{ for } \kappa \in [N_t^{j,m,k-1},N_t^{j,m,k}-1]
			\\
			\bz^{N_t^{j,m,k}}_{j,m} = \bc( \bz^{N_t^{j,m,k}-1}_{j,m})
			\\
			\bz^0_{j,m}(0) = \mathring{\bz}_{j,m}.
		\end{cases}
		\label{eq:model_approximation}
	\end{align}
	\label{eq:OCP_fully_discrete}
\end{subequations}

We define the continuous in time representation of a numerical solution for the $j$-th particle in the $m$-th realization as a piecewise constant function.
For this, we set $h=(M,N_t^u)$ and 
\begin{align}
	\bz_{j,m}^h(t) = \sum_{\kappa=0}^{N_t^{j,m}-1} \bz_{j,m}^\kappa \chi_{[t^\kappa,t^{\kappa+1}]}(t),
	\qquad\qquad t \in [0,T].
	\label{eq:piecewise_constant_approx}
\end{align}
With this definition, we can evaluate $\bz_{j,m}^h$ at arbitrary times, in particular at the time discretizations points of the other particles or realizations.
We set $\bz^h = \{\bz_{j,m}^h\}_{m,j=1}^{M,N}$.

We define a discretized version of the objective taking into account the time discretization of the control
\begin{align}
	j^h(\bz^h,\bmu^h) = \frac{1}{M} \sum_{m=1}^{M} \frac{\Delta t}{N} \sum_{j = 1}^N \sum_{\kappa=1}^{N_t^u}
	\Js(\bz_{j,m}^h(t^\kappa),t^\kappa) + \frac{\alpha}{2} |\bphi(\bz_{j,m}^h(t^\kappa))^\top\bmu^h(t^\kappa)|^2.
	\label{eq:discrete_functional}
\end{align}

We have that the discrete solution $\bz^h$ defined as the piecewise constant approximation (cf. \eqref{eq:piecewise_constant_approx}) converges to the true solution $\bz$ in the following sense:
\begin{theorem}
	\label{thm:ApproximationForwardSDE}
	Let \cref{asmpt:existence_uniqueness_SDE} hold.
	Then there exists for any $\bmu \in L^2(0,T)^N$ a constant $C>0$ and $N_t^* \in \Nb$ such that for all $N_t \geq N_t^*$
	\begin{align}
		\Eb \sup_{t \in [0,T]} [|\bz^h (t) - \bz(t)|^2] \leq C \Delta t(1 + \Eb[|\bz(0)|^2] ).
	\end{align}
\end{theorem}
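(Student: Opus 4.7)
The plan is to recognize the scheme \eqref{eq:model_approximation} as a jump-adapted Euler--Maruyama method and adapt the classical strong-convergence analysis \cite{HighamKloeden2006ConvergenceStabilityJDP,PlatenBruti2010NumSolSDEFinance} to the present setting. The key structural observation is that, by construction, the grid $\{t^\kappa\}_{\kappa=0}^{N_t^{j,m}}$ contains every jump time $T_{j,k}^m$ of the compound Poisson driver $\bY$. Hence on each subinterval $[t^\kappa,t^{\kappa+1}]$ the process $\bY$ is constant, the jumps are applied exactly at the grid nodes, and the local discretization error reduces to that of an Euler step on a continuous diffusion SDE. The coefficients $\ba$, $\bb$, $\bc$ satisfy \cref{asmpt:existence_uniqueness_SDE}, which provides the linear growth and Lipschitz hypotheses needed throughout.

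First, I would introduce the gridpoint error $e^\kappa := \bz(t^\kappa) - \bz^\kappa_{j,m}$ and write the exact solution of \eqref{eq:SDE_constraint} in integral form on $[t^\kappa,t^{\kappa+1}]$; subtracting the recursion \eqref{eq:model_approximation} expresses $e^{\kappa+1}$ as the sum of a drift residual, a Brownian residual, and a jump residual that vanishes identically by the jump-adapted construction. The drift residual I would split as $\ba(\bz(s),u,s) - \ba(\bz(t^\kappa),u,t^\kappa)$ plus $\ba(\bz(t^\kappa),u,t^\kappa) - \ba(\bz^\kappa_{j,m},\bmu^h(t^\kappa)^\top\bphi(\bz^\kappa_{j,m}),t^\kappa)$; the first piece is controlled by the a priori moment estimate
\begin{equation*}
\Eb\Bigl[\sup_{s\in[t^\kappa,t^{\kappa+1}]}|\bz(s)-\bz(t^\kappa)|^2\Bigr] \le C\,\Delta t^\kappa\,(1+\Eb[|\bz(0)|^2]),
\end{equation*}
which follows from the linear-growth part of \cref{asmpt:existence_uniqueness_SDE} together with a Gronwall argument on $\Eb[\sup_{s\le t}|\bz(s)|^2]$; the second piece is bounded by $|e^\kappa|^2$ via the Lipschitz continuity in \cref{asmpt:existence_uniqueness_SDE} and the smoothness of $\bphi$. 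The Brownian residual is handled by It\^o's isometry, and the error from replacing $\bmu$ by its piecewise-constant interpolant $\bmu^h$ in \eqref{eq:piecewise_constant_control} is controlled by the standard density estimate for step functions in $L^2(0,T)$, which is exactly what forces the threshold $N_t^\star$ to appear in the statement.

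Assembling these local bounds yields a recursion of the form
\begin{equation*}
\Eb[|e^{\kappa+1}|^2] \le (1+C\Delta t^\kappa)\,\Eb[|e^\kappa|^2] + C\,\Delta t^\kappa\,\Delta t\,(1+\Eb[|\bz(0)|^2]),
\end{equation*}
to which the discrete Gronwall inequality, combined with $\sum_\kappa \Delta t^\kappa = T$, gives $\max_\kappa \Eb[|e^\kappa|^2] \le C\,\Delta t\,(1+\Eb[|\bz(0)|^2])$. To upgrade this pointwise-in-$\kappa$ estimate to the supremum in $t$ required by the theorem, I would apply Doob's (or Burkholder--Davis--Gundy) maximal inequality to the Brownian integrals over each subinterval and combine it with the piecewise-constant definition \eqref{eq:piecewise_constant_approx} of $\bz^h$, so that the supremum error on $[t^\kappa,t^{\kappa+1}]$ is controlled by $\Eb[|e^\kappa|^2]$ plus the same local oscillation bound already established.

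The main obstacle I anticipate is not any single estimate but the bookkeeping forced by the doubly indexed, realization-dependent grid $\{t^\kappa\}_{\kappa=0}^{N_t^{j,m}}$: the step sizes $\Delta t^\kappa$ are random, the grids differ across $(j,m)$, and one must verify that all constants produced by Gronwall, by the moment bounds, and by It\^o's isometry depend only on $T$, the constants $L$ and $C_R$ from \cref{asmpt:existence_uniqueness_SDE}, and the Poisson intensity, not on the particular realization. The finiteness of $\Eb[N_j^m]$ and the uniform identity $\sum_\kappa \Delta t^\kappa = T$ together guarantee this realization-uniformity and, combined with the $L^2$-approximation of the control, explain why $N_t$ must exceed some $N_t^\star$ depending on $\bmu$.
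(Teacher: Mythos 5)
The paper does not prove this theorem at all: it is quoted from the literature, with the proof delegated to \cite[Theorem 2.4]{HighamKloeden2006ConvergenceStabilityJDP}. Your sketch is essentially a reconstruction of that standard strong-convergence argument (integral representation of the local error, linear-growth moment bounds, Lipschitz estimates, discrete Gronwall, and a maximal inequality to pass from gridpoint errors to the supremum), adapted to the jump-adapted grid actually used in \eqref{eq:model_approximation} rather than the uniform grid of the reference. In that sense your route is the intended one, and the observation that the jump residual vanishes because every jump time is a grid node is exactly the right structural point.

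Two steps would not close as written, however. First, you introduce the term coming from replacing $\bmu$ by its piecewise-constant interpolant $\bmu^h$ from \eqref{eq:piecewise_constant_control} and propose to control it by the density of step functions in $L^2(0,T)$. Density gives convergence without a rate: for a general $\bmu\in L^2(0,T)^L$ the quantity $\|\bmu-\bmu^h\|_{L^2}^2$ need not be $O(\Delta t)$, so this term cannot be absorbed into the claimed bound $C\,\Delta t\,(1+\Eb[|\bz(0)|^2])$. The estimate of the theorem compares the discrete and continuous dynamics driven by the \emph{same} control; the interpolation of $\bmu$ is handled separately in the paper (in the convergence result for the reduced functional, where $\bmu^h\to\bmu$ is an assumption), and should be kept out of this proof. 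Second, \cref{asmpt:existence_uniqueness_SDE} only provides a \emph{local} Lipschitz constant $C_R$, while your bound of the drift increment by $C|e^\kappa|^2$, and hence the Gronwall recursion, implicitly uses a global Lipschitz constant. With only local Lipschitzness plus linear growth one must either insert a stopping-time/truncation argument controlling the probability that $\bz$ or $\bz^h$ leaves the ball of radius $R$, or strengthen the hypothesis to global Lipschitz continuity as in the cited reference; as stated, the recursion's constant $C$ is not realization- and $R$-independent. The remaining bookkeeping concerns you raise (random step sizes, $\sum_\kappa\Delta t^\kappa=T$, a.s.\ finiteness of the number of jumps) are handled correctly.
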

For the proof, see 	\cite[Theorem 2.4]{HighamKloeden2006ConvergenceStabilityJDP}.
For further reading about discretization of jump-diffusion processes, we refer to \cite[Chapter 16]{HighamKloeden2021NumSDE} as well as \cite{HighamKloeden2006ConvergenceStabilityJDP} and \cite{PlatenBruti2010NumSolSDEFinance}.

Using the result of \cref{thm:ApproximationForwardSDE}, we can introduce the reduced functional
\begin{align}
	\widehat{j}^h(\bmu^h) = j(\Sc^h(\bmu^h),\bmu^h),
	\label{eq:discrete_reduced_functional}
\end{align}
where $\Sc^h(\bmu^h)$ is the solution of \eqref{eq:model_approximation} using the piecewise constant $\bmu^h \in L(0,T)^L$ given $\{\mu^\kappa\}_{\kappa=0}^{N_t^u}$ (cf. \eqref{eq:piecewise_constant_control}).
The following theorem states that convergence of the value of the discrete functional to the value of the continuous one in the case of convergence of the discrete controls to a continuous one.
Recall that we set $h=(M,N_t^u)$. 
When writing $h \rightarrow \infty$, we let both components tend to infinity at the same time.

\begin{theorem}
	Let \cref{asmpt:functional,asmpt:existence_uniqueness_SDE} hold.
	Assume that we have given a sequence $(\bmu^h)_h$ and a $\bmu \in L^2(0,T)^L$ with $\bmu^h \rightarrow \bmu$ for $h\rightarrow\infty$.
	Then we have
	\begin{align}
		|\widehat{j}^h(\bmu^h) - \widehat{j}(\bmu)| \rightarrow 0,
		\qquad\qquad
		\text{ for } h \rightarrow \infty.
	\end{align}
\end{theorem}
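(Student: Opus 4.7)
My plan is to use the triangle-inequality split
\begin{align*}
|\widehat{j}^h(\bmu^h) - \widehat{j}(\bmu)| \leq |\widehat{j}^h(\bmu^h) - \widehat{j}^h(\bmu)| + |\widehat{j}^h(\bmu) - \widehat{j}(\bmu)|
\end{align*}
and handle the two terms by, respectively, a uniform-in-$h$ continuity estimate of the discrete reduced functional with respect to $\bmu$ and a pure discretization error at the fixed control $\bmu$.

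\textbf{Fixed-control discretization error.} For the second term, \cref{thm:ApproximationForwardSDE} yields $\Eb \sup_{t \in [0,T]} |\bz_{j,m}^h(t) - \bz_{j,m}(t)|^2 \to 0$ as $N_t^u \to \infty$ on each realization $m$. Combined with the $C^1$ regularity of $\Js$ from \cref{asmpt:functional}, the continuity of $\bphi$, and standard moment bounds for the SDE, this gives convergence in expectation of both the piecewise-constant Riemann sum in time for the running cost and the quadratic control penalty to their continuous counterparts. The outer empirical mean $\frac{1}{M}\sum_m$ converges to $\Eb$ by the strong law of large numbers, since the $M$ realizations are i.i.d.\ and an $L^1$ envelope is provided by the moment estimate; combining the two limits diagonally as $h = (M,N_t^u) \to \infty$ produces $|\widehat{j}^h(\bmu) - \widehat{j}(\bmu)| \to 0$.

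\textbf{Uniform continuity of $\widehat{j}^h$.} For the first term I would derive a discrete stability estimate on the solution map $\Sc^h$. Since $\Psi(\bz,\bmu)$ enters affinely in $\bmu$ through the Kronecker product \eqref{eq:definition_otimesJ} and the drift, diffusion, and jump coefficients are locally Lipschitz by \cref{asmpt:existence_uniqueness_SDE}, a discrete Gronwall argument applied to the Euler--Maruyama scheme \eqref{eq:model_approximation} yields
\begin{align*}
\Eb \max_{\kappa} |\Sc^h(\bmu^h)_\kappa - \Sc^h(\bmu)_\kappa|^2 \leq C\, \|\bmu^h - \bmu\|_{L^2(0,T)^L}^2,
\end{align*}
with $C$ independent of $h$ on bounded subsets of $L^2$; the sequence $(\bmu^h)$ is bounded thanks to its convergence. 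Combining this with the local Lipschitz property of $\Js$ from \cref{asmpt:functional} and the local Lipschitz continuity of $\bmu \mapsto |\bphi(\bz)^\top \bmu|^2$ then delivers $|\widehat{j}^h(\bmu^h) - \widehat{j}^h(\bmu)| \to 0$.

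\textbf{Main obstacle.} The chief technical obstacle is obtaining the uniform-in-$h$ stability estimate together with the matching moment bound $\sup_h \Eb \max_\kappa |\bz_{j,m}^\kappa|^2 < \infty$ in the presence of the jump term $\bc(\bz^-)\rmd\bY$. Because jump and deterministic time points differ per particle and per realization, one has to argue on the common refinement introduced in \cref{sec:FullyDiscreteProblem} and carefully absorb the jump increments into the Gronwall iteration using only the at-most-quadratic growth and local Lipschitz hypotheses on $\bc$; everything else then reduces to bookkeeping around the piecewise-constant interpolation \eqref{eq:piecewise_constant_approx}.
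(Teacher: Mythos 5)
Your proof uses the same triangle-inequality decomposition as the paper and treats the two terms in essentially the same way: the term $|\widehat{j}^h(\bmu)-\widehat{j}(\bmu)|$ via the forward approximation result of \cref{thm:ApproximationForwardSDE}, time-quadrature convergence, and the law of large numbers, and the term $|\widehat{j}^h(\bmu^h)-\widehat{j}^h(\bmu)|$ via continuity of the discrete reduced functional. Your version is in fact slightly more careful than the paper's, since you correctly require the stability estimate for $\Sc^h$ to be uniform in $h$ (the paper's one-line appeal to ``continuity of $\widehat{j}^h$'' leaves this implicit), but the overall route is the same.
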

\begin{proof}
	Recall the definition of $\hat{j}$ in \eqref{eq:continous_reduced_functional}.
	We calculate
	\begin{align}
		|\widehat{j}^h(\bmu^h) - \widehat{j}(\bmu)| = |\widehat{j}^h(\bmu^h) - \widehat{j}^h(\bmu)| + |\widehat{j}(\bmu) - \widehat{j}^h(\bmu)|.
		\label{eq:convergence_proof}
	\end{align}
	The first summand on the right-hand side in \eqref{eq:convergence_proof} converges to zero due to the continuity of $\widehat{j}^h$.
	The continuity of $\widehat{j}^h$ follows because it is a composition of continuous functions since $\Js$ and the absolute value are continuous.
	The second summand converges to zero due to convergence of quadratures of integral in time \cite[Chapter 5]{Atkinson1991introductionNumAna}, and approximation properties of the discrete expectation value that is known as the \emph{(Weak) Law of Large Numbers} \cite[Theorem 8.2]{GrinsteadSnell1997IntroductionProbability}.
\end{proof}
%

\section{Optimality system}
\label{sec:OptSys}

In this section, we derive the optimality system corresponding to \eqref{eq:OCP_fully_discrete}.
In the following, we consider only a single realization and omit to write the dependence of all quantities to $m$ for the sake of better readability.
An additional reason is that we only need one realization of each path of the $N$ particles in each optimization iteration in our optimization procedure below.

We define the following Lagrange functional $\Lc$, introducing the adjoint processes $\br$ and $\blambda$, as 
\begin{align}
	\Lc(\bz,\bmu,\br,\blambda) \coloneqq
	\frac{1}{N}\sum_{j=1}^{N} \sum_{k = 0}^{N_t^j-1}
	\Lc^{j,k} (\bz,\bmu,\br,\blambda),
\end{align}
with
\begin{multline}
	\Lc^{j,k}(\bz, \bmu, \br, \blambda) 
	\coloneqq
	\sum_{\kappa = N_t^{j,k}}^{N_t^{j,k+1}}
	\Js(\zkappajk,t^\kappa) \Deltatkappa + \frac{\alpha}{2} |\bphi(\zkappajk)^\top\bmu^\kappa|^2 \Deltatkappa
	\\
	-
	\sum_{\kappa = N_t^{j,k}}^{N_t^{j,k+1}-1}
	\Big( \bz_{j}^{\kappa+1} - \zkappajk \ba(\zkappajk,\bmu^\kappa,t^\kappa)\Deltatkappa + \bb(\zkappajk,t^\kappa) \Delta \bB^\kappa_j \Big)^\top \br_{j}^{\kappa+1} 
	\\
	+\Big( \bz_{j}^{N_t^{j,k}} - \bc(\bz_{j}^{N_t^{j,k}-1}) \Big)^\top\blambda_{j}^{N_t^{j,k+1}}
	.
\end{multline}

The optimality system consists of the partial derivatives of the Lagrange functional with respect to its variables set to zero.

Calculating the gradient with respect to $\bz$,
we obtain that $\nabla_{\zkappajk} \Lc \, \delta \zkappajk = 0$ must hold for all admissible directions $\delta \zkappajk$ with $j \in [N]$ and $\kappa = N_t^{j,k},\ldots,N_t^{j,k+1}-1$.
This is equivalent to
\begin{align}
	\br_{j}^\kappa - \br_{j}^{\kappa+1} - \nabla_\bz \ba(\zkappajk,\bmu^\kappa,t^\kappa)^\top \br_{j}^{\kappa+1}
	- \nabla_\bz \bb(\zkappajk,t^\kappa)^\top \br_{j}^{\kappa+1} 
	+ \nabla_\bz j^h(\zkappajk,\bmu)
	= 0 .
	\label{eq:variation_dynamics_adjoint}
\end{align}

We also have to deal with the initial conditions. 
We have that $\nabla_{\bz_{j}^{N_t^{j}}} \Lc \, \delta \bz_{j}^{N_t^{j}} = 0$ and $\nabla_{\bz_{j}^{N_t^{j,k}-1}} \Lc \, \delta \bz_{j}^{N_t^{j,k}-1} = 0$ are equivalent to 
\begin{align*}
	\delta \bz_{j}^{N_t^{j,k}} \Big( \blambda_{j}^{N_t^{j,k+1}} - \br_{j}^{N_t^{j,k}} \Big) = 0
\end{align*}
and
\begin{align*}
	\delta \bz_{j}^{N_t^{j,k}-1} \Big(
	\br_{j}^{N_t^{j,k}-1} - \nabla_\bz \bc(\bz_{j}^{N_t^{j,k}-1})^\top\blambda_{j}^{N_t^{j,k+1}}
	\Big) = 0.
\end{align*}

Using this calculations, and since $\delta\bz_j^\kappa$ was arbitrary, we can conclude that
\begin{align*}
	\blambda_{j}^{N_t^{j,k+1}} = \br_{j}^{N_t^{j,k}}
\end{align*}
and
\begin{align*}
	\br_{j}^{N_t^{j,k}-1} = \nabla_\bz \bc(z_{j}^{N_t^{j,k}-1})^\top\blambda_{j}^{N_t^{j,k+1}}.
\end{align*}
Hence for all $j \in [N]$ and $k \in [N_j]$, the terminal condition for jumps in the adjoint model are given by
\begin{align}
	\br_{j}^{N_t^{j,k}-1} 
	= \nabla_\bz \bc(z_{j}^{N_t^{j,k}-1})^\top \br_{j}^{N_t^{j,k}}.
\end{align}

Furthermore, we get from \eqref{eq:variation_dynamics_adjoint} the dynamics
\begin{align}
	\br_{j}^\kappa = \br_{j}^{\kappa+1} + \nabla_\bz \ba(\zkappajk,\bmu^\kappa,t^\kappa)^\top \br_{j}^{\kappa+1}
	+ \nabla_\bz \bb(\zkappajk,t^\kappa)^\top \br_{j}^{\kappa+1} 
	- \nabla_\bz j^h(\zkappajk,\bmu^\kappa).
	\label{eq:adjoint_dynamics}
\end{align}

Summarizing, we obtain the following adjoint model
\begin{subequations}
	\begin{multline}
		\br_{j}^\kappa = \br_{j}^{\kappa+1} + \nabla_\bz \ba(\zkappajk,\bmu^\kappa,t^\kappa)^\top \br_{j}^{\kappa+1}
		+ \nabla_\bz \bb(\zkappajk,t^\kappa)^\top \br_{j}^{\kappa+1} 
		\\ 
		- \nabla_\bz j^h(\zkappajk)
		\qquad
		\text{ for } \kappa \in [N_t^{j,k},N_t^{j,k+1}-1],
	\end{multline}
	\begin{align}
		\br_{j}^{N_t^{j,k}-1} 
		= \nabla_\bz \bc(z_{j}^{N_t^{j,k}-1})^\top \br_{j}^{N_t^{j,k}}.
	\end{align}
	\label{eq:AdjointModel}
\end{subequations}
Notice that \eqref{eq:AdjointModel} evolves backwards in time.
There are some important differences between the discretize-before-optimize (DBO) approach that we exploit in this work and the optimize-before-discretize approach (OBD).
First of all, we do not have to solve Forward-Backward-SDE (FBSDE) since we do not have to take care of the filtration that evolves forward in time; see, e.g., \cite[Chapter 7, Definition 3.1]{Mao2008SDE} for the definition of solutions to Backward SDEs. 
Since we discretized before we optimized, we cosider for the adjoint equation the same Brownian motion increments and jump-times as for the original system; 
see also \cite{BartschDenkVolkwein2023adjointCalibrationSDE}.
Furthermore, the adjoint jump kernel coincides with one of the model and hence the adjoint jump frequency also coincides with the model jump frequency.
This is different in the OBD case \cite{Bartsch2021MOCOKI}.

Analogously to \cref{thm:ApproximationForwardSDE}, we can formulate a result for the adjoint equation.
For this let $\br^h$ define the continuous representation of a numerical solution for the $j$-th particle in the $m$-th realization as the piecewise constant function as
$\br^h = \{\br^h_{j,m}\}_{j,m=1}^{M,N}$ with
\begin{align}
	\br_{j,m}^h(t) = \sum_{\kappa=0}^{N_t^{j,m}-1} \br_{j,m}^\kappa \chi_{[t^\kappa,t^{\kappa+1}]}(t)
	\qquad\qquad t \in [0,T].
	\label{eq:piecewise_constant_approx_adjoint}
\end{align}

\begin{theorem}
	Let \cref{asmpt:existence_uniqueness_SDE,asmpt:functional} hold.
	Let $\br^h$ be the piecewise constant approximation given in \eqref{eq:piecewise_constant_approx_adjoint}.
	Then there exists for any $\bmu \in L^2(0,T)^N$ a constant $C>0$ and $N_t^* \in \Nb$ such that for all $N_t \geq N_t^*$
	\begin{align}
		\Eb \sup_{t \in [0,T]} [|\br^h (t) - \br(t)|^2] \leq C \Delta t(1 + \Eb[|\br(0)|^2] ).
	\end{align}
\end{theorem}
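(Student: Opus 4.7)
The plan is to reduce the statement to \cref{thm:ApproximationForwardSDE}. First, I would reverse time by setting $s = T - t$ and $\widetilde{\br}(s) \coloneqq \br(T - s)$ so that the backward dynamics~\eqref{eq:AdjointModel} become a forward-in-time jump-diffusion SDE for $\widetilde{\br}$ driven by the time-reversed Brownian increments and Poisson jumps. Its Euler--Maruyama approximation coincides, after the same time reversal, with the scheme defining $\br^h$ in~\eqref{eq:piecewise_constant_approx_adjoint}, so the claim reduces to a strong-convergence estimate for an Euler--Maruyama discretization of a jump-diffusion SDE, which is precisely the setting of~\cite[Theorem~2.4]{HighamKloeden2006ConvergenceStabilityJDP}.

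Second, I would verify that the coefficients of the time-reversed adjoint fulfill \cref{asmpt:existence_uniqueness_SDE}. They are linear combinations of $\nabla_\bz\ba$, $\nabla_\bz\bb$, and $\nabla_\bz\bc$ acting on $\widetilde{\br}$, together with the source term $\nabla_\bz\Js$. Using the explicit structure~\eqref{eq:compact_forward} and \cref{asmpt:functional}, and assuming mild additional smoothness of $\bb$ and $\bc$ with bounded derivatives, these coefficients are linear in $\widetilde{\br}$ with globally Lipschitz factors and locally Lipschitz dependence on the forward state. Hence the at-most-quadratic-growth and local Lipschitz conditions of \cref{asmpt:existence_uniqueness_SDE} are inherited by the reversed adjoint system.

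Third, since the adjoint coefficients depend on the forward trajectory, I would split the error as
\begin{align*}
	\Eb\sup_{t\in[0,T]}|\br^h(t)-\br(t)|^2
	\leq 2\,\Eb\sup_{t\in[0,T]}|\br^h(t)-\widetilde{\br}^h(t)|^2
	+ 2\,\Eb\sup_{t\in[0,T]}|\widetilde{\br}^h(t)-\br(t)|^2,
\end{align*}
where $\widetilde{\br}^h$ denotes the Euler--Maruyama scheme for the adjoint equation evaluated along the \emph{exact} forward trajectory $\bz$. The second summand is controlled by $C\Delta t\,(1+\Eb|\br(T)|^2)$ via the cited theorem applied to the reversed adjoint. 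For the first summand I would exploit the Lipschitz continuity of the adjoint coefficients in the forward variable together with \cref{thm:ApproximationForwardSDE}, and then close the estimate by a discrete Gronwall argument running backward in time.

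The main obstacle is this last step: although the adjoint dynamics are linear in $\br$, their coefficients involve the approximated forward trajectory $\bz^h$, so the Gronwall constant a priori grows exponentially in $T$ and one must verify carefully that the $\mathcal{O}(\Delta t)$ forward error propagates into the adjoint error with a multiplicative constant independent of $\Delta t$. All remaining ingredients -- moment bounds for $\br$, convergence of the Monte Carlo quadrature appearing in $\nabla_\bz j^h$, and the treatment of the compound Poisson jumps via a random measure -- are routine adaptations of the techniques in~\cite{HighamKloeden2006ConvergenceStabilityJDP}.
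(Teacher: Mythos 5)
Your proposal is consistent with the route the paper takes, but you should know that the paper does not actually prove this theorem: it states the result ``analogously to'' \cref{thm:ApproximationForwardSDE} and implicitly relies on the same citation to Higham--Kloeden, with no argument supplied for the adjoint case. Your sketch is therefore a genuine elaboration of what the paper only gestures at, and the two places where you go beyond the paper are exactly the places where work is actually required. First, you correctly identify that the adjoint recursion \eqref{eq:AdjointModel} has coefficients depending on the \emph{discretized} forward trajectory $\bz^h$ rather than on $\bz$, so the forward error from \cref{thm:ApproximationForwardSDE} must be propagated through a backward discrete Gronwall estimate; the paper is silent on this. Second, you correctly note that the adjoint involves $\nabla_\bz\ba$, $\nabla_\bz\bb$, $\nabla_\bz\bc$ and $\nabla_\bz\Js$, so that \cref{asmpt:existence_uniqueness_SDE} (which constrains only the coefficients themselves, not their derivatives) does not suffice without additional boundedness and Lipschitz assumptions on these gradients; again the paper does not address this. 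One caveat on your first step: in the paper's discretize-before-optimize setting the adjoint is a pathwise backward recursion along already-realized Brownian increments and jump times, not a BSDE, so the time reversal raises no adaptedness issues --- but the continuous-time limit object $\br(t)$ that the theorem compares against is never actually defined in the paper, and your proof would need to supply that definition (as the solution of the time-reversed linear jump-diffusion equation) before the Higham--Kloeden theorem can be invoked. With those additions your plan is sound and is, in effect, the proof the paper omits.
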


\bigskip

The next step is to derive the reduced gradient $\nabla_{\bmu} \widehat{j}^h$ of $\widehat{j}^h$ defined in \eqref{eq:discrete_reduced_functional}.
For this, we take the directional derivative of $\Lc$ with respect to $\bmu^\kappa$ for $\kappa \in [N_t^u]$ and calculate for the arbitrary direction $\delta \bmu^\kappa$
\begin{align}
	\nabla_{\bmu^k} \Lc \, \delta \bmu^\kappa
	= \frac{\Delta t}{N}\sum_{j=1}^N  \delta \bmu^\kappa \left\lbrace
	\alpha \, \bphi(\bz^h_j(t^\kappa))\bphi(\bz^h_j(t^\kappa))^\top \bmu^\kappa - \Psi(\bz^{h}_j(t^\kappa))^\top \br_j^h(t^\kappa)
	\right\rbrace.
	\label{eq:Gradient}
\end{align}

Hence, the reduced gradient is given by
\begin{align}
	\nabla_{\bmu^\kappa} \widehat{j}^h(\bmu^\kappa) = \frac{\Delta t}{N}\sum_{j=1}^N 
	\alpha \, \bphi(\bz^h_j(t^\kappa))\bphi(\bz^h_j(t^\kappa))^\top \bmu^\kappa - \Psi(\bz^{h}_j(t^\kappa))^\top \br_j^h(t^\kappa),
	\label{eq:definition_Gradient}
\end{align}
where $\bz$ and $\br$ are solutions to \eqref{eq:model_approximation} and \eqref{eq:AdjointModel}, respectively, corresponding to $\bmu$.
Hence, we have that at optimality it must hold for all $\kappa \in [N_t^u]$ that
\begin{align}
	\nabla_{\bmu^\kappa} \widehat{j}^h(\bmu^\kappa) = 0
\end{align}
which is equivalent to
\begin{align}
	\sum_{j=1}^N 
	\alpha \, \bphi(\bz^h_j(t^\kappa))\bphi(\bz^h_j(t^\kappa))^\top \bmu^\kappa 
	=
	\sum_{j=1}^N  \Psi(\bz^{h}_j(t^\kappa))^\top \br_j^h(t^\kappa)
	\qquad\qquad
	\forall \kappa \in [N_t^u].
	\label{eq:optimality_condition}
\end{align}

Notice that \eqref{eq:optimality_condition} is a high-dimensional nonlinear system with respect to $\bmu^\kappa$.

\section{Numerical implementation and optimization procedure}
\label{sec:NumImpl}

In this section, we explain our strategy to solve the model and adjoint problem and the procedure to solve the full optimization problem \eqref{eq:OCP_fully_discrete}.
We use Monte Carlo strategies to solve the arising equations.
In principle, these methods are meshless.
However, to define our shape functions $\bphi$, we shall introduce a computational domain and a discrete phase space.
We want to point out, that we do not specify boundary conditions, as this computational domain is needed only to define the number and the centers of the shape functions.

In \cref{sec:Discretization}, we introduce the computational domain and define the shape functions in \cref{sec:Functions}.
In \cref{sec:Jumps}, we explain our procedure to handle the jump processes before we discuss our strategy to solve the model and adjoint equation in \cref{sec:FreeFlight} and \cref{sec:Adjoint}, respectively.
In \cref{sec:OptiProc}, we discuss our optimization strategy.

\subsection{Discretization}
\label{sec:Discretization}

We consider our two-dimensional computational domain as $\Omega_x \times \Omega_v \coloneqq (-\xmax,\xmax) \times (-\vmax,\vmax)$, with given $\xmax,\vmax>0$.
We use a standard finite-volume discretization in phase space and define the discrete phase space $\Omega_{\Delta x, \Delta v} \coloneqq \Omega_{\Delta x} \times \Omega_{\Delta v}$ as follows; see also \cite{Bartsch2021MOCOKI, BarthHerbinOhlberger2018FiniteVolume}.
We choose a partition of $\Omega_x$ and $\Omega_v$ of equally-spaced, non-overlapping square cells with side length $\Delta v = 2\vmax/N_v$ where $N_v \geq 2$ and $\Delta x = 2\xmax/N_x$ with $N_x\geq2$, respectively. 
On this partition, we consider a cell-centered representation as follows:
\begin{align*}
	\Omega_{\Delta x} 
	\coloneqq \left\lbrace \; x^i \in \Omega_x \;\big\vert\;  i \in [N_x] \; \right\rbrace, 
	\qquad\qquad
	x^i \coloneqq (i-\nicefrac{1}{2})\Delta x - \xmax
\end{align*}
and
\begin{align*}
	\Omega_{\Delta v} \coloneqq \left\lbrace \; v^l \in \Omega_v \;\big\vert\; l \in [N_v] \; \right\rbrace,
	\qquad\qquad
	v^l \coloneqq \left(l-\nicefrac{1}{2}\right)\Delta v - \vmax.
\end{align*}
Notice that the computational phase space is centered at $(0,0)$ by this choice.

In order to discretize the control $\bmu_\ell$, $\ell \in [L]$, with $L=N_x \, N_v$, we use the time discretization given in \eqref{eq:Deterministic_timestep}. 
We identify the control and the solution $\bz$ as the constant approximation between the grid points corresponding to this discretization of $[0,T]$; cf. \eqref{eq:piecewise_constant_approx} and \eqref{eq:piecewise_constant_control}.

\subsection{Shape functions}
\label{sec:Functions}

For our implementation of the shape functions $\varphi_\ell$, $\ell \in [L]$, we consider \emph{radial-basis functions} (RBF) \cite{Buhmann2003RadialBasisFunctions}.
More specifically, we choose a certain form of a bump function with center $x_c$ and shape parameter $\varepsilon_\varphi>0$ given by
\begin{align}
	\varphi(x,x_c,\varepsilon_\varphi) \coloneqq
	\begin{cases}
		\exp\left(-  \frac{1}{1-(\varepsilon_\varphi |x-x_c|)^2}\right) &\text{ for } |x-x_c|<\frac{1}{\varepsilon_\varphi} ,\\
		0 &\text{ else}.
	\end{cases}
	\label{eq:Gaussian_RBF}
\end{align}
The functions $\phi^x_\ell,\phi^v_\ell$, $\ell \in [L]$, are defined using $\varphi$ by
\begin{subequations}
	\begin{align}
		&\phi^x_\ell: \Rb \rightarrow \Rb,
		&&\phi^x_\ell(x) = \varphi(x,x^\ell,\varepsilon_\varphi),
		\\
		&\phi^v_\ell: \Rb \rightarrow \Rb,
		&&\phi^v_\ell(v) = \varphi(v,v^\ell,\varepsilon_\varphi).
	\end{align}
\end{subequations}

There are results for the convergence of the approximation of $L^2$ functions using RBF with compact support present in the literature; see, e.g., \cite[Theorem 6.7]{Buhmann2003RadialBasisFunctions}.

The derivative of the shape functions is given by:
\begin{align*}
	\varphi_x(x,x_c,\varepsilon_\varphi) = 
	\begin{cases}
		\exp\left(-  \frac{1}{1-(\varepsilon_\varphi |x-x_c|)^2}\right)
		\frac{-2\varepsilon_\varphi |x-x_c|}{(1-(\varepsilon_\varphi |x-x_c|)^2)^2}
		&\text{ for } |x-x_c|<\frac{1}{\varepsilon_\varphi}  ,
		\\
		0 &\text{ else}.
	\end{cases}
\end{align*}

In \cref{fig:vis_Bump_RBF}, we visualize the RBF function together with its derivative.
We point out, that both of them are defined on whole $\Rb$ and are compactly supported.
Notice that we choose the same shape function for position and velocity for the sake of simplicity. 
However, it is in principle possible to choose different functions for each component. 

\begin{figure}
	\centering
	\includegraphics[width=0.35\textwidth]{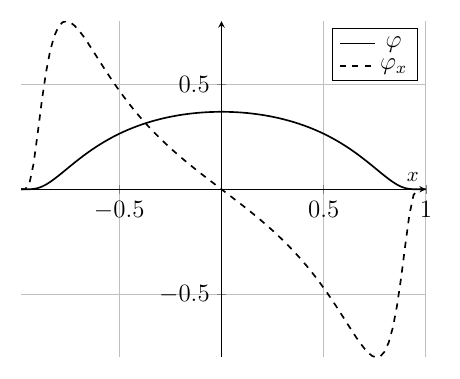}
	\caption{A radial basis function $\varphi$ and its derivative $\varphi_x$.}
	\label{fig:vis_Bump_RBF}
\end{figure}

\subsection{Jump process}
\label{sec:Jumps}

For our implementation, we consider the jump part $\bc$ to be related to the Keilson-Storer kernel introduced in \cite{KeilsonStorer1952KS}.
For a pre-jump velocity $v \in \Rb$ and post-jump velocity $w \in \Rb$, it is given by
\begin{align}
	A(v, w) = \Gamma \sqrt{\frac{\beta}{\pi}} \exp (-\beta |w-\gamma v|^2) ,
	\label{eq:Keilson-Storer_kernel}
\end{align}
with parameters $\gamma \in[-1,1]$ and $\beta >0$.

With this kernel, many physical phenomena can be modeled including Brownian motion, telegraphic noise, and the famous Bhatnagar--Gross--Krook (BGK) collision operator \cite{BGK1954}. 
It was also used in previous work \cite{Bartsch2021MOCOKI,BartschBorzi2024MOCOKIFeedbackStabilization} by the authors.
For this work, we define for a position $x \in \Rb$ and velocity $v \in \Rb$
\begin{align}
	\bc(x,v) = (x,\gamma v + \varsigma_\beta)^\top,
	\label{eq:jump_law}
\end{align}
where $\varsigma_\beta$ are precalculated random numbers obeying $\mathcal{N}(0,(2\beta)^{-1})$. We denote by $\mathcal{N}(\xi,\varpi)$ the Gaussian distribution with mean $\xi$ and variance $\varpi$.
The parameters $\gamma$ and $\beta$ are given in the definition of the Keilson-Storer kernel \eqref{eq:Keilson-Storer_kernel}.

Furthermore, we need to sample the jump times $T_{j,k}$. 
For this, we use the fact that the mean free time is given as the reciprocate of jump frequency of the Keilson-Storer kernel defined as
\begin{align}
	\sigma  =  \int_{\Rb} A(v,w) \rmd v = \sqrt{\frac{\beta}{\pi}}.
	\label{eq:collision_frequency}
\end{align}
In order to determine the time instances at which a particle undergoes a velocity transition due to jump using $\sigma$, one can follow the procedure described in, e.g., \cite{Jacoboni1983,Bartsch2020OCPKS}.

If $\sigma$ is the
jump frequency, then $\sigma \rmd t$ is the probability that a particle has a jump during the time $\rmd t$. 
Now, assuming that a particle has a jump at time $t$, the probability that it will be subject to another jump at time $t + \delta t$ dt is computed according to a Poisson distribution given by
\begin{align}
	\exp\left( - \int_{t}^{t + \delta t} \sigma \rmd t' \right)
	= \exp \left(-\delta t \, \sigma \right).
	\label{eq:collision_time_Poisson}
\end{align}
The formula in \eqref{eq:collision_time_Poisson} is the probability distribution of the distance between two events of a Poisson process.
Following a standard approach \cite{Jacoboni1983} and using a uniformly distributed random number $\nu \in (0,1)$, one obtains the following rule
\begin{align}
	\delta t = - \sigma^{-1} \log (\nu).
\end{align}
Notice that we do not have an adjoint jump frequency, like we have it in \cite{Bartsch2020OCPKS}.

\subsection{Model equation}
\label{sec:FreeFlight}

For our numerical examples in \cref{sec:NumExp}, we consider in the following structure the coefficient functions
\begin{subequations}
	\begin{align}
		\ba(\bz,u,t) = \mathrm{e} \otimes   \binom{0}{u(\bz,t)}
		+ H\bz + h(\bz,t),
		\qquad
		H \coloneqq \begin{pmatrix}
			0 & 1 \\
			-\eta & 0\\
		\end{pmatrix} \otimes I_N,
	\end{align}
	where $h$ is a given smooth nonlinear function such that $\ba$ fulfills \cref{asmpt:existence_uniqueness_SDE}, $I_N \in \Rb^{N\times N}$ is the $N$-dimensional identity matrix and
	\begin{align}
		\bb(\bz,t) = \begin{pmatrix}
			\mathrm{b}_1  & 0 \\
			0 & \mathrm{b}_2
		\end{pmatrix} \otimes I_N,	
		\qquad\qquad
		\mathrm{b}_1,\mathrm{b}_2>0.
	\end{align}
	\label{eq:coefficients}
\end{subequations}

During the free flight, we consider Euler's method for integration using stepsize $\Delta t$ that is smaller than the difference of two jump times.
The Euler-Maruyama method is given by
\begin{subequations}
	\begin{align}
		&\bx^{\kappa+1}_j = \bx^\kappa_j +  \bv^\kappa_j \,\Delta t  +\mathrm{b}_1 \, \Delta \bB^{x,\kappa}_j
		&\bx^{N_t^{j,k}}_j = \bx^{N_t^{j,k}-1}_j,
		\\
		&\bv^{\kappa+1}_j =  \bv^\kappa_j + \Big( u(\bz^\kappa_j,t^\kappa) - \eta \, \bx^\kappa_j \Big) \, \Delta t  +\mathrm{b}_2 \,\Delta \bB^{v,\kappa}_j
		&\bv^{N_t^{j,k}}_j = \gamma \bv^{N_t^{j,k}-1}_j + \varsigma_\beta,
	\end{align}
	\label{eq:SDE_splitted_drift_Euler}%
\end{subequations}
where $\Delta \bB^{x,\kappa}_j$ and $\Delta \bB^{v,\kappa}_j$ for $j \in [N]$ and $\kappa \in [N_t]$ are precomputed Gaussian samples for position and velocity with zero mean and standard deviation $\sqrt{\Delta t^\kappa}$; see \cite{Maruyama1955SDE}.
The whole method for solving the forward model is summarized in \cref{algo:particle_solver,algo:model_solver}.

\subsection{Adjoint equation}
\label{sec:Adjoint}

Recall that the adjoint equation \eqref{eq:AdjointModel} evolves backward.
We introduce the adjoint position $\bq$ and adjoint velocity $\bp$ for the adjoint phase space coordinate $\br = (\bq,\bp)$.
The adjoint equations for the coefficients fixed in \eqref{eq:coefficients} are given by
\begin{subequations}
	\begin{align}
		&\bq^\kappa_j = \bq^{\kappa+1}_{j} + 
		\Delta t \Big( \left( u_x(\zkappajk) - \eta\right) \bp^{\kappa+1}_j  
		- \nabla_\bx j^h(\zkappajk,u(\zkappajk)) \Big),
		\\ 
		&\bp^\kappa_j = \bp^{\kappa+1}_j 
		+ \Delta t \Big( \bq^{\kappa+1}_j + u_v(\bz^{\kappa}_j) \bp^{\kappa+1}_j 
		- \nabla_\bv j^h(\zkappajk,u(\zkappajk))\Big),
	\end{align}
	\label{eq:adjoint_eq_approximation}
\end{subequations}
with terminal conditions
\begin{align}
	\bq_{j,T_{N_{j,k}}} = -\nabla_\bx j^h\left(\bz_{j,T_{N_{j,k}}},u(\bz_{j,T_{N_{j,k}}})\right),
	\qquad
	\bp_{{j,T_{N_{j,k}}}} = -\nabla_\bv j^h\left(\bz_{j,T_{N_{j,k}}},u(\bz_{j,T_{N_{j,k}}})\right).
	\label{eq:terminal_condition_adjoint}
\end{align}
The derivatives of $u$ are given by
\begin{align*}
	u_x(x,v,t) = \sum \bmu_\ell \partial_x \bphi_\ell^x(x)  \bphi_\ell^v(v),
	\qquad\qquad
	u_v(x,v,t) = \sum \mu_\ell  \bphi_\ell^x(x) \partial_v \bphi_\ell^v(v).
	\label{eq:derivatives_control} 
\end{align*}
The derivatives of the functional are given by
\begin{align}
	\nabla_\bx j^h(\bz_j,u) = \nabla_\bx \Js (\bz_j) + \alpha \, u(\bz_j) u_x(\bz_j),
	\qquad
	\nabla_\bv j^h(\bz_j,u) = \nabla_v \Js (\bz_j) + \alpha \, u(\bz_j) u_v(\bz_j).
\end{align}

\subsection{Optimization procedure}
\label{sec:OptiProc}

In this section, we describe our numerical strategy to solve the discrete optimization problem \eqref{eq:OCP_fully_discrete}.
We exploit a stochastic gradient method (SGD) with linesearch \cite{Vaswani2019painlessSGDLinesearch}.
To assemble the gradient in every iteration, we need to integrate the equations of motion for the model \eqref{eq:SDE_splitted_drift_Euler} and adjoint equation \eqref{eq:adjoint_eq_approximation}.
In \cref{algo:particle_solver}, we summarize this procedure taking into account the jumps.

\begin{algorithm}
	\caption{Integration of the equations of motion}
	\label{algo:particle_solver}
	\small
	\begin{algorithmic}[1]
		\Require Initial position $\bx^0_{j}$ and velocity $\bv^0_{j}$, number of jump times $N_j$ of the $j$-th particle
		\State Set $t^\kappa \gets 0$, $t\gets 0$, $\bar{t} \gets 0$, $\kappa \gets 0$
		\While{$t < T$}
		\If{$t^{\kappa+1}>T^{j,k}$} \Comment{\emph{jump}}
		\State $\bar t \gets T^{j,k}-t$
		\State Update position and velocity using Euler's method according to \eqref{eq:SDE_splitted_drift_Euler} (respectively \eqref{eq:adjoint_eq_approximation}) using $\bar{t}$ as stepsize in time
		\State Calculate the initial condition of the next time interval using $\bc(\bv_{j,k})$ in \eqref{eq:jump_law} (respectively $\nabla\bc$)
		\State $t \gets T^{j,k}$
		\Else \Comment{\emph{no jump}}
		\State $\bar t \gets t^{k+1} - t$
		\State Update position and velocity using Euler's method according to \eqref{eq:SDE_splitted_drift_Euler} (respectively \eqref{eq:adjoint_eq_approximation}) using $\Delta t$ as stepsize in time $\bar t$.
		\State Set $t \gets t^{\kappa+1}$
		\EndIf			
		\State $\kappa \gets \kappa+1$
		\EndWhile
	\end{algorithmic}
\end{algorithm}

In \cref{algo:model_solver}, we summarize the procedure to solve the state equation.
Notice that since the particles evolve independently from each other, one can heavily parallelize this method.
Furthermore, notice that we generate jump times for each particle a priori.
This jump times are then also used in the solver for the adjoint equation.

\begin{algorithm}
	\caption{Model solver}
	\label{algo:model_solver}
	\small
	\begin{algorithmic}[1]
		\Require Parameters $\gamma,\beta>0$, initial particles $\bz^0$
		\For{each particle $j=1$ \textbf{to} $N$} \Comment{\emph{particles evolve independently from each other}}
		\State $T_{j,0} \gets 0$, $\tilde{k} \gets 0$
		\While{$T_{j,\tilde{k}} < T$} \Comment{\emph{Sample jump times}}
		\State Set $\tilde{k} \gets \tilde{k}+1$
		\State Sample new jump time $\delta t_{j,\tilde{k}}$ using
		\eqref{eq:collision_time_Poisson} and set $T_{j,\tilde{k}} \gets T_{j,\tilde{k}-1} + \delta t_{j,\tilde{k}}$ 
		\EndWhile\Comment{\emph{Create $\cup_{k=1}^{N_j} (T_{j.k-1}, T_{j,k}] = (0,T]$}}
		\State{Set $N_j \gets \tilde{k}-1$}
		\State Sample $\varsigma^k_\beta$, $k \in [N_j]$ obeying $\mathcal{N}(0,(2\beta)^{-1})$ for each jump time $T_{j,k}$
		\State Integrate the equations of motion of the particles using \cref{algo:particle_solver}
		\EndFor \\
		\Return 
	\end{algorithmic}
\end{algorithm}

The method to calculate the reduced gradient \eqref{eq:definition_Gradient} is presented in \cref{algo:CalculateGradient}. 
This procedure is executed iteratively in the complete optimization algorithm summarized  in \cref{algo:OptAlgo}.
In each iteration, we sample a new set of initial conditions for the particles obeying the initial distribution $\mathrm{\bz}$. 
Then we solve the model equation where we generate the jump times and Brownian motion increments.
These are then also used in the solver for the adjoint equation.
The resulting (discrete) trajectories for all particles are taken in to account to assemble the reduced gradient for this iteration.

\begin{algorithm}
	\caption{Calculate the gradient}
	\label{algo:CalculateGradient}
	\small
	\begin{algorithmic}[1]
		\Require current control iterate $\bmu^n=(\bmu^n,\ldots,\bmu^n_{N_t})$, initial distribution $\mathring{\bz} = (\mathring{\bx},\mathring{\bv})$, desired trajectory $\zd(t) = (\xd(t),\vd(t))$
		\State Sample initial condition of $N$ particles obeying $\mathring{\bz}$
		\State Solve the model system \eqref{eq:model_approximation} using \cref{algo:model_solver}
		\State Calculate terminal condition for adjoint equation according to \eqref{eq:terminal_condition_adjoint}
		\State Solve adjoint system \eqref{eq:AdjointModel} using the timesteps generated in \cref{algo:model_solver} and the integration analog to the one in \cref{algo:particle_solver}
		\State Calculate the reduced gradient $\nabla_{\bmu} \widehat{j}^h$ according to \eqref{eq:definition_Gradient}.
		\\ \Return $\nabla_{\bmu} \widehat{j}^h$
	\end{algorithmic}
\end{algorithm}

The resulting gradient is then used as a stepdirection as usual in the stochastic gradient descent; see, e.g, \cite{Higham2019DeepLearning}.
Additionally, we consider the stochastic version of the classical Armijo-linesearch to determine a stepsize $\zeta_n>0$
\begin{align}
	\widehat{j}^h \left(\bmu^n - \zeta_n \nabla \widehat{j}^h (\bmu_n)\right) 
	\leq 
	\widehat{j}^h (\bmu_n) - c \, \zeta_n \|\nabla   \widehat{j}^h (\bmu_n)\|^2,
	\label{eq:Stochastic_Armijo}
\end{align}
with a hyperparameter $c>0$; see \cite{Vaswani2019painlessSGDLinesearch,Dvinskikh2020LinesearchStochastic}.
The algorithm terminates if the difference between two subsequent control iterates is closer than a given tolerance or if the maximum iteration depth $n_{\mathrm{max}}$ is reached.
The whole strategy is summarized in \cref{algo:OptAlgo}.

\begin{algorithm}
	\caption{Gradient descent scheme}
	\label{algo:OptAlgo}
	\small
	\begin{algorithmic}[1]
		\Require desired state $\zd(t) = (\xd(t),\vd(t))$, 
		initial guess of the control $\bmu^0=(\bmu^0,\ldots,\bmu^0_{N_t})$, initial distribution $\mathring{\bz}=(\mathring{\bx},\mathring{\bv})$, tolerance $\tol>0$, maximum iteration depth $n_{\max}$
		\State Set $n \gets 0$ and initialize $\mathrm{E} \gg tol$
		\While{$\mathrm{E}  > \tol$ \textbf{and} $n<n_{\max}$}
		\State Compute reduced gradient $\bh^n$ using \cref{algo:CalculateGradient} \Comment{\emph{Sampling of random data included here}}
		\State Determine the step-size $\zeta_n$ along $\bh^n$ satisfying
		\eqref{eq:Stochastic_Armijo}
		\State Update control: $\bmu^{n+1} \gets \bmu^n + \zeta_n \, \bh^n$
		\State $\mathrm{E} \gets \|\bmu^{n+1}-\bmu^n\|_2$
		\State Set $n \gets n+1$
		\EndWhile
		\\ \Return $\bU^\ell$
	\end{algorithmic}
\end{algorithm}

\begin{figure}
	\footnotesize
	\centering
	
	\tikzset{every picture/.style={line width=0.75pt}}
	\begin{tikzpicture}[x=0.75pt,y=0.75pt,yscale=-0.8,xscale=0.8]
		
		\draw  [fill={rgb, 255:red, 0; green, 0; blue, 0 }  ,fill opacity=0.14 ] (173,179) -- (408,179) -- (447.33,218.33) -- (447.33,403) -- (173,403) -- cycle ;
		\draw   (30,11.4) .. controls (30,6.76) and (33.76,3) .. (38.4,3) -- (376.93,3) .. controls (381.57,3) and (385.33,6.76) .. (385.33,11.4) -- (385.33,42.6) .. controls (385.33,47.24) and (381.57,51) .. (376.93,51) -- (38.4,51) .. controls (33.76,51) and (30,47.24) .. (30,42.6) -- cycle ;
		\draw   (212.83,67) -- (266.33,111) -- (212.83,155) -- (159.33,111) -- cycle ;
		\draw   (176,213) .. controls (176,209.13) and (179.13,206) .. (183,206) -- (424.33,206) .. controls (428.2,206) and (431.33,209.13) .. (431.33,213) -- (431.33,234) .. controls (431.33,237.87) and (428.2,241) .. (424.33,241) -- (183,241) .. controls (179.13,241) and (176,237.87) .. (176,234) -- cycle ;
		\draw   (244,284.2) .. controls (244,280.22) and (247.22,277) .. (251.2,277) -- (413.13,277) .. controls (417.11,277) and (420.33,280.22) .. (420.33,284.2) -- (420.33,305.8) .. controls (420.33,309.78) and (417.11,313) .. (413.13,313) -- (251.2,313) .. controls (247.22,313) and (244,309.78) .. (244,305.8) -- cycle ;
		\draw   (198,358) .. controls (198,355.79) and (199.79,354) .. (202,354) -- (392,354) .. controls (394.21,354) and (396,355.79) .. (396,358) -- (396,370) .. controls (396,372.21) and (394.21,374) .. (392,374) -- (202,374) .. controls (199.79,374) and (198,372.21) .. (198,370) -- cycle ;
		\draw   (2,353.4) .. controls (2,349.31) and (5.31,346) .. (9.4,346) -- (133.6,346) .. controls (137.69,346) and (141,349.31) .. (141,353.4) -- (141,375.6) .. controls (141,379.69) and (137.69,383) .. (133.6,383) -- (9.4,383) .. controls (5.31,383) and (2,379.69) .. (2,375.6) -- cycle ;
		\draw   (1,215.2) .. controls (1,210.67) and (4.67,207) .. (9.2,207) -- (155.8,207) .. controls (160.33,207) and (164,210.67) .. (164,215.2) -- (164,239.8) .. controls (164,244.33) and (160.33,248) .. (155.8,248) -- (9.2,248) .. controls (4.67,248) and (1,244.33) .. (1,239.8) -- cycle ;
		\draw   (35,96.2) .. controls (35,91.67) and (38.67,88) .. (43.2,88) -- (105.8,88) .. controls (110.33,88) and (114,91.67) .. (114,96.2) -- (114,120.8) .. controls (114,125.33) and (110.33,129) .. (105.8,129) -- (43.2,129) .. controls (38.67,129) and (35,125.33) .. (35,120.8) -- cycle ;
		\draw    (69.33,346) -- (70.31,252) ;
		\draw [shift={(70.33,250)}, rotate = 90.6] [color={rgb, 255:red, 0; green, 0; blue, 0 }  ][line width=0.75]    (10.93,-3.29) .. controls (6.95,-1.4) and (3.31,-0.3) .. (0,0) .. controls (3.31,0.3) and (6.95,1.4) .. (10.93,3.29)   ;
		\draw    (112.33,110.5) -- (124.34,110.63) -- (157.33,110.98) ;
		\draw [shift={(159.33,111)}, rotate = 180.61] [color={rgb, 255:red, 0; green, 0; blue, 0 }  ][line width=0.75]    (10.93,-3.29) .. controls (6.95,-1.4) and (3.31,-0.3) .. (0,0) .. controls (3.31,0.3) and (6.95,1.4) .. (10.93,3.29)   ;
		\draw    (331,253) -- (221.33,252) ;
		\draw    (331,253) -- (331.46,263.95) -- (331.92,275) ;
		\draw [shift={(332,277)}, rotate = 267.61] [color={rgb, 255:red, 0; green, 0; blue, 0 }  ][line width=0.75]    (10.93,-3.29) .. controls (6.95,-1.4) and (3.31,-0.3) .. (0,0) .. controls (3.31,0.3) and (6.95,1.4) .. (10.93,3.29)   ;
		\draw    (219.33,332) -- (335,333) ;
		\draw    (198,364) -- (144,364) ;
		\draw [shift={(142,364)}, rotate = 360] [color={rgb, 255:red, 0; green, 0; blue, 0 }  ][line width=0.75]    (10.93,-3.29) .. controls (6.95,-1.4) and (3.31,-0.3) .. (0,0) .. controls (3.31,0.3) and (6.95,1.4) .. (10.93,3.29)   ;
		\draw    (212.67,51) -- (212.81,65) ;
		\draw [shift={(212.83,67)}, rotate = 269.4] [color={rgb, 255:red, 0; green, 0; blue, 0 }  ][line width=0.75]    (10.93,-3.29) .. controls (6.95,-1.4) and (3.31,-0.3) .. (0,0) .. controls (3.31,0.3) and (6.95,1.4) .. (10.93,3.29)   ;
		\draw    (212.83,155) -- (213.6,177) ;
		\draw [shift={(213.67,179)}, rotate = 268.01] [color={rgb, 255:red, 0; green, 0; blue, 0 }  ][line width=0.75]    (10.93,-3.29) .. controls (6.95,-1.4) and (3.31,-0.3) .. (0,0) .. controls (3.31,0.3) and (6.95,1.4) .. (10.93,3.29)   ;
		\draw    (266.33,111) -- (330.33,111) ;
		\draw [shift={(332.33,111)}, rotate = 180] [color={rgb, 255:red, 0; green, 0; blue, 0 }  ][line width=0.75]    (10.93,-3.29) .. controls (6.95,-1.4) and (3.31,-0.3) .. (0,0) .. controls (3.31,0.3) and (6.95,1.4) .. (10.93,3.29)   ;
		\draw    (221.33,243) -- (221.33,351) ;
		\draw [shift={(221.33,353)}, rotate = 270] [color={rgb, 255:red, 0; green, 0; blue, 0 }  ][line width=0.75]    (10.93,-3.29) .. controls (6.95,-1.4) and (3.31,-0.3) .. (0,0) .. controls (3.31,0.3) and (6.95,1.4) .. (10.93,3.29)   ;
		\draw    (70.33,206) -- (70.98,131) ;
		\draw [shift={(71,129)}, rotate = 90.5] [color={rgb, 255:red, 0; green, 0; blue, 0 }  ][line width=0.75]    (10.93,-3.29) .. controls (6.95,-1.4) and (3.31,-0.3) .. (0,0) .. controls (3.31,0.3) and (6.95,1.4) .. (10.93,3.29)   ;
		\draw    (334.33,314) -- (335,333) ;
		
		\draw (38,3) node [anchor=north west][inner sep=0.75pt]  [font=\footnotesize] [align=left] {Choose initial guess $\displaystyle u^{0}$, desired trajectory $\zd$, \\ hyperparameters, \\and initialize $\displaystyle \mathrm{E} \gg \mathrm{tol}$, $\displaystyle n\leftarrow 0$};
		\draw (175,95) node [anchor=north west][inner sep=0.75pt]  [font=\footnotesize] [align=left] {$\displaystyle  \begin{array}{{>{\displaystyle}l}}
				\mathrm{E} \gg \mathrm{tol}\\
				n< n_{\max}
			\end{array}$};
		\draw (279,94) node [anchor=north west][inner sep=0.75pt]  [font=\footnotesize] [align=left] {\textbf{false}};
		\draw (226.5,154) node [anchor=north west][inner sep=0.75pt]  [font=\footnotesize] [align=left] {\textbf{true}};
		\draw (185,209) node [anchor=north west][inner sep=0.75pt]  [font=\footnotesize] [align=left] {Solve forward model with \\
			\cref{algo:model_solver} and \cref{algo:particle_solver}};
		\draw (250,280) node [anchor=north west][inner sep=0.75pt]  [font=\footnotesize] [align=left] {Solve adjoint model analog \\to solving forward model};
		\draw (204,357) node [anchor=north west][inner sep=0.75pt]  [font=\footnotesize] [align=left] {Calculate gradient (cf. \eqref{eq:definition_Gradient})};
		\draw (11.4,349) node [anchor=north west][inner sep=0.75pt]  [font=\footnotesize] [align=left] {perform Armijo \\lineserach \eqref{eq:Stochastic_Armijo}};
		\draw (19,209) node [anchor=north west][inner sep=0.75pt]  [font=\footnotesize] [align=left] {Update Control\\$\displaystyle \bmu ^{n+1} =\bmu ^{n} + \zeta _{n} \ \boldsymbol{g}^{n}$};
		\draw (41,99) node [anchor=north west][inner sep=0.75pt]  [font=\footnotesize] [align=left] {$\displaystyle n=n+1$};
		\draw (336,105) node [anchor=north west][inner sep=0.75pt]  [font=\footnotesize] [align=left] {stop};
		\draw (202,271) node [anchor=north west][inner sep=0.75pt]  [font=\footnotesize] [align=left] {$\bz_{j}^{h}$};
		\draw (351,325) node [anchor=north west][inner sep=0.75pt]  [font=\footnotesize] [align=left] {$\br_{j}^{h}$};
		\draw (81,254) node [anchor=north west][inner sep=0.75pt]   [align=left] {$\zeta _{n}$};
		\draw (146,344) node [anchor=north west][inner sep=0.75pt]  [font=\footnotesize] [align=left] {$\boldsymbol{g}^{n}$};
		\draw (175,181) node [anchor=north west][inner sep=0.75pt]  [font=\footnotesize] [align=left] {\textbf{\cref{algo:CalculateGradient}}};	
	\end{tikzpicture}
	\caption{UML flowchart of the optimization procedure \cref{algo:OptAlgo}.}
	\label{fig:UML}
\end{figure}
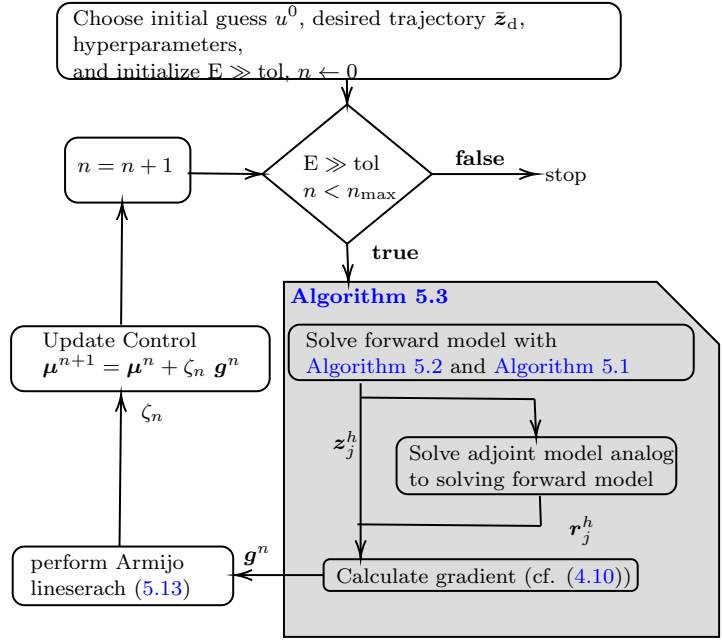

\section{Numerical experiments}
\label{sec:NumExp}

In this section, we show the results several numerical experiments in order to validate our optimization procedure.
For all of them, we fix the numerical computational phase space having the bounds $\xmax=2$ and $\vmax=2$. 
For the basis functions $\varphi$ in \eqref{eq:Gaussian_RBF}, we take the grid-points of $\Omega_{\Delta x} \times \Omega_{\Delta v}$ as centers of the radial basis functions.
Hence, we have $L=N_x \, N_v$ basis functions.
We iterate through all center points $(x^i,v^l)$ while assembling $u$ in \eqref{eq:control_structure}.
For the final time, we choose $T=1$.

As tracking cost, we consider for $\bz = (x,v) \in \Rb^2$
\begin{align}
	\Js(\bz) = -\exp\left(-\frac{1}{2\sigma_\Js^2}(|x-\xd(t)|^2+|v-\vd(t)|^2)\right)
	\label{eq:tracking_cost}
\end{align}
with a desired (mean) phase space trajectory $\zd(t) = (\xd(t),\vd(t))$.
Notice that $\Js$ is bounded from above and below and smooth and hence satisfies \cref{asmpt:functional}.
Its partial derivatives are then given by
\begin{align*}
	\begin{split}
		\nabla_x \Js(\bz) = \sigma_\Js^{-2}(x-\xd)\exp\left(-\frac{1}{2\sigma_\Js^2}(|x-\xd|^2+|v-\vd|^2)\right),
		\\
		\nabla_v \Js(\bz) = \sigma_\Js^{-2}(v-\vd)\exp\left(-\frac{1}{2\sigma_\Js^2}(|x-\xd|^2+|v-\vd|^2)\right).
	\end{split}
\end{align*} 

In all experiments, we use $\beta = 10$ and $\gamma=0.9$.
Furthermore, for the temporal discretization, we choose $N_t = 50$, $\Delta t = 0.1$ which leads to $T=5$.
Furthermore, we consider $N_x = N_v = 10$, $\Delta x = \Delta v = 0.4$ and use $\varepsilon_\varphi=0.5$ and consider $N=2\cdot10^3$ particles.

We start with a test case where we want to center all particles in the middle of the computational domain given a Gaussian distribution as initial configuration and continue with the case where we have a uniform distribution in a subdomain of the computational domain and also want to have the particles centered in the middle (cf. \cref{sec:Centering}).
Then, in \cref{sec:Stabilization}, we average the control found in this case in time and apply it to a random initial configuration that also has particles outside the previous subdomain.
The next test case is to follow a (non-smooth) trajectory in time (cf. \cref{sec:TimeDependentTrajectory}). 
After this, we test our framework with a system of particles including coupling in \cref{sec:Coupling}.
Finally, we present in \cref{sec:AdjointIndependent} a strategy that avoids storing the forward trajectories and test it for the setting of the previous case.

\subsection{Centering of the particles}
\label{sec:Centering}
For our first example, we choose $\zd(t) = (0,0)$. 
Moreover, we take $\eta=1$ in \eqref{eq:coefficients}.
We start first with a normal distributed initial condition $\mathring{z} \sim \mathcal{N}(0.75,0.75)^\top,0.01 \, I_2)$.
In \cref{fig:Centering_NormalDistribution}, we plot the result of this numerical experiment.
In \cref{fig:Centering_Normal_InitialFinal}, we plot the initial configuration (gray) and the one obtained in the final timestep using our optimized control.
The corresponding evolution of the mean and variance in phase space is plotted in \cref{fig:Centering_Normal_MeanPhaseSpace}. 
In the uncontrolled case, all particles will (up to the effects of jumps and diffusion) remain on their initial orbit around the center.
In the control case, the orbit is immediately decreased in position, i.e. the particles move directly close to the center. On the other side in velocity, there is a small overshoot before converging to the center.
From \cref{fig:Centering_Norma_Mean}, we can obtain that the final configuration is kept stable.
In the uncontrolled case, one expects sinusoidal behaviour for the mean in position and velocity due to Hook's law given by $H$ (cf. \eqref{eq:coefficients}).
\begin{figure}[H]
	\begin{subfigure}[l]{0.3\textwidth}
		\includegraphics[width=\textwidth]{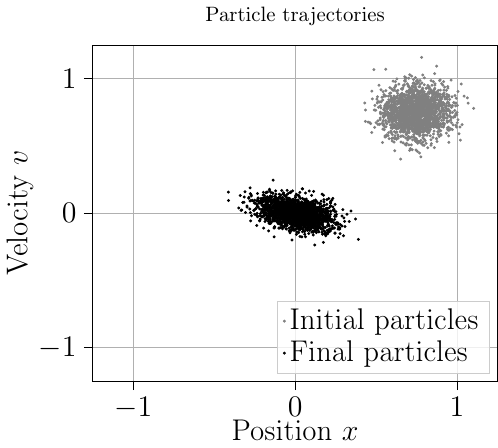}
		\caption{}
		\label{fig:Centering_Normal_InitialFinal}
	\end{subfigure}
	\hfill
	\begin{subfigure}[l]{0.3\textwidth}
		\includegraphics[width=\textwidth]{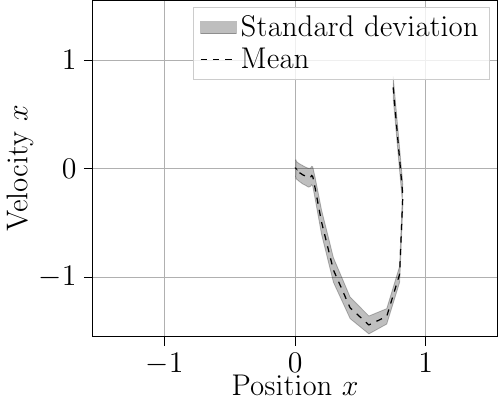}
		\caption{}
		\label{fig:Centering_Normal_MeanPhaseSpace}
	\end{subfigure}
	\hfill
	\begin{subfigure}[l]{0.3\textwidth}
		\includegraphics[width=\textwidth]{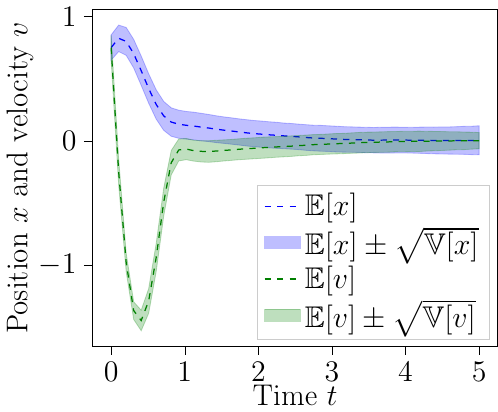}
		\caption{}
		\label{fig:Centering_Norma_Mean}
	\end{subfigure}
	\caption{Results of numerical experiments trying to center the particles starting from a normal distribution.
		(a) Initial (gray) and final (black) configuration;
		(b) Mean and standard deviation in phase space;
		(c) Mean and standard deviation in position and velocity over time.}
	\label{fig:Centering_NormalDistribution}
\end{figure}

Now, we start with a uniform distribution in a subset of our computation domain, i.e. uniform distributed in $[-1,1]^2$.
Our goal is again to center the particles in the middle, i.e. $\zd(t) = (0,0)$, while again considering $\eta=1$ in \eqref{eq:coefficients}.
In \cref{fig:Centering_Uniform}, we show the results of this test case.
In \cref{fig:Centering_Uniform_initialFinal} the initial and final configuration is visualized.
From \cref{fig:Centering_Uniform_MeanVar} it is evident that also here the final configuration is stable.
In \cref{fig:Centering_Uniform_control}, we plot the values of $\mu(t)$ for $t=0$ to give an impression of how the control might look like.
In \cref{sec:Stabilization}, we will use the results of this test case the generate a stabilization control for general random initial data.

\begin{figure}[H]
	\begin{subfigure}[l]{0.3\textwidth}
		\includegraphics[width=\textwidth]{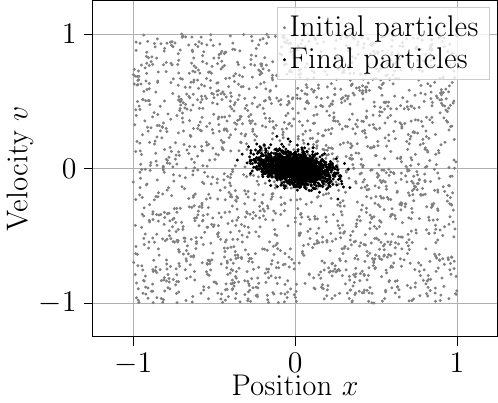}
		\caption{}
		\label{fig:Centering_Uniform_initialFinal}
	\end{subfigure}
	\hfill
	\begin{subfigure}[l]{0.3\textwidth}
		\includegraphics[width=\textwidth]{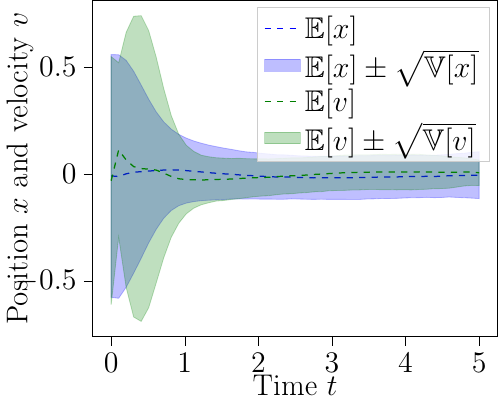}
		\caption{}
		\label{fig:Centering_Uniform_MeanVar}
	\end{subfigure}
	\hfill
	\begin{subfigure}[l]{0.32\textwidth}
		\includegraphics[width=\textwidth]{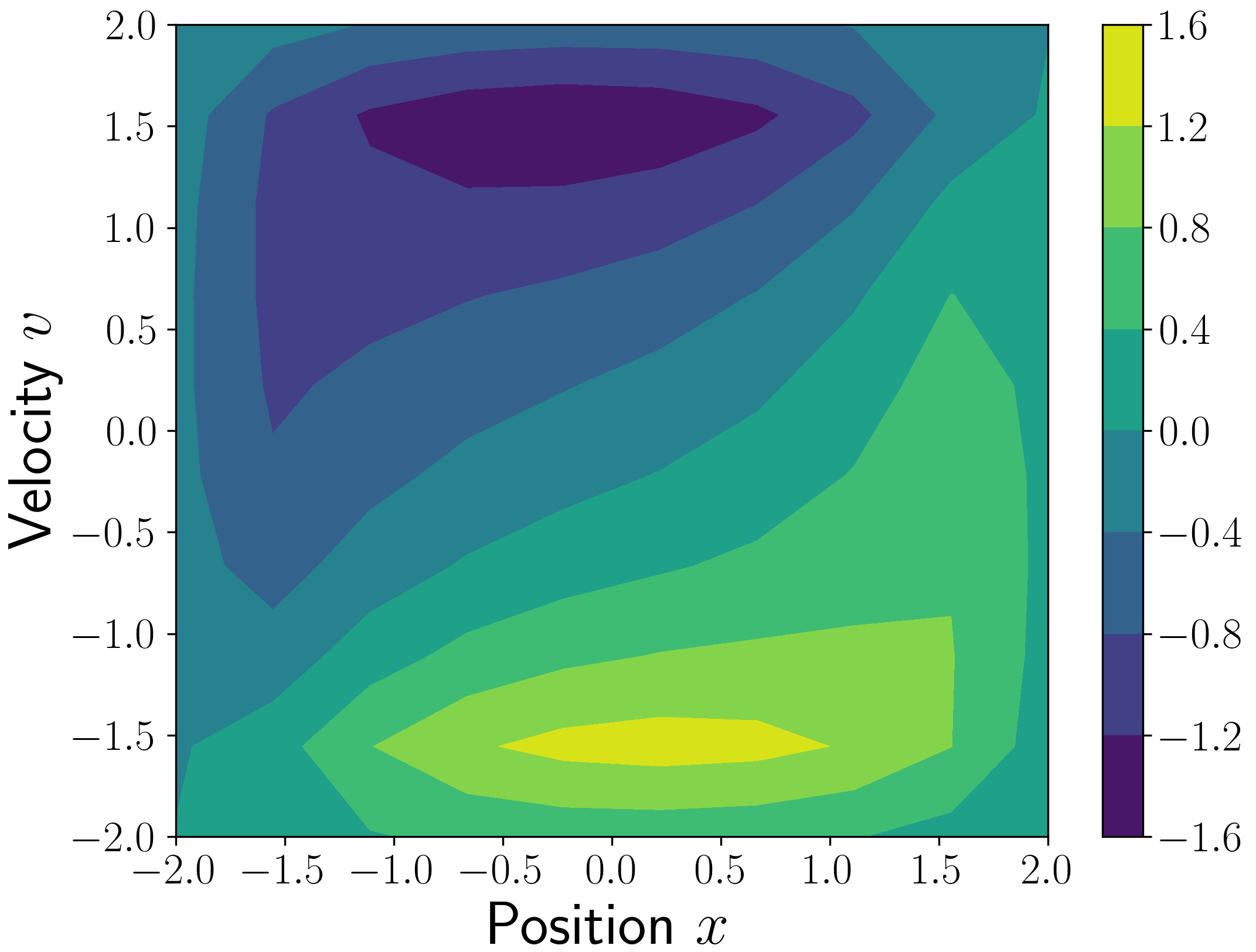}
		\caption{}
		\label{fig:Centering_Uniform_control}
	\end{subfigure}
	\caption{Results of numerical experiments trying to center the particles starting from a uniform distribution.
		(a) Initial and final configuration;
		(b) Evolution of mean and variance;
		(c) Control $\bmu(t)$ at final time $t=T$.}
	\label{fig:Centering_Uniform}
\end{figure}

\subsection{Stabilization}
\label{sec:Stabilization}

In this test case, we aim at stabilizing the system at $\zd(t)=(0,0)$ with  a random initial condition, in particular with an initial condition different from the one that was used to calculate the control.
More specifically, we show that by taking the average in time of the control obtained using our procedure above, we might be able to apply it to any input data and observe a stabilizing behavior.
Since our control depends by construction on position and velocity and hence on the state of the system, we can interpret it as a feedback-like control.
The idea of taking the time average of an open-loop control to obtain a closed-loop one can already be found in \cite{BartschBorzi2024MOCOKIFeedbackStabilization}.
However, in the current work, there are now distribution functions involved.
We consider still $\eta=1$ in \eqref{eq:coefficients}. 
This means, in particular, that the desired configuration $\zd$ is not reached in the uncontrolled case since the particles stay on their initial orbit (up to perturbations by jump and diffusion).

We define the time-averaged control given the optimized control from \cref{sec:Centering} where we started with a uniform initial distribution
\begin{align}
	\bar{u}(x,v) 
	= \sum_{\ell=1}^L \bar{\bmu}_\ell \bphi^x_\ell(x) \bphi^v_\ell(v) 
	\qquad\qquad
	\text{ with }
	\bar{\bmu}_\ell \coloneqq \frac{1}{T} \int_0^T \bmu_\ell(t) \rmd t
	\label{eq:time_avg_control}.
\end{align}
Notice that this control $\bar{u}$ is now independent of time.
Hence it can also be used in an infinite time interval.
We want to point out that there is no additional optimization procedure executed.

In \cref{fig:Stabilization} we show the results of this test case.
We start with a superposition of a bimodal and uniform configuration as initial condition that is shown in \cref{fig:Stabilization_Initial}.
After applying our control $\bar{u}$ that is visualized in \cref{fig:Stabilization_control}, we end up with a final configuration shown in \cref{fig:Stabilization_final}.
We observe that our average control is capable of collecting the particles in the center and keeping them there.
Notice that we initialized the particles outside of the domain where we generated uniform particles (cf. \cref{fig:Centering_Uniform}).
Nevertheless, since our control $u$ is defined in whole $\Rb^2$, it is possible the stabilize all particles in the center $(0,0)$.
\begin{figure}[H]
	\begin{subfigure}[l]{0.3\textwidth}
		\includegraphics[width=\textwidth]{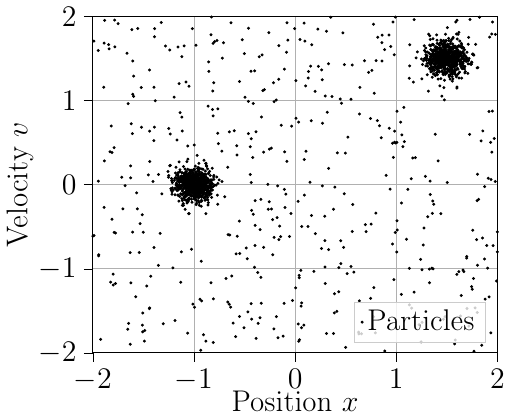}
		\caption{}
		\label{fig:Stabilization_Initial}
	\end{subfigure}
	\hfill
	\begin{subfigure}[l]{0.32\textwidth}
		\includegraphics[width=\textwidth]{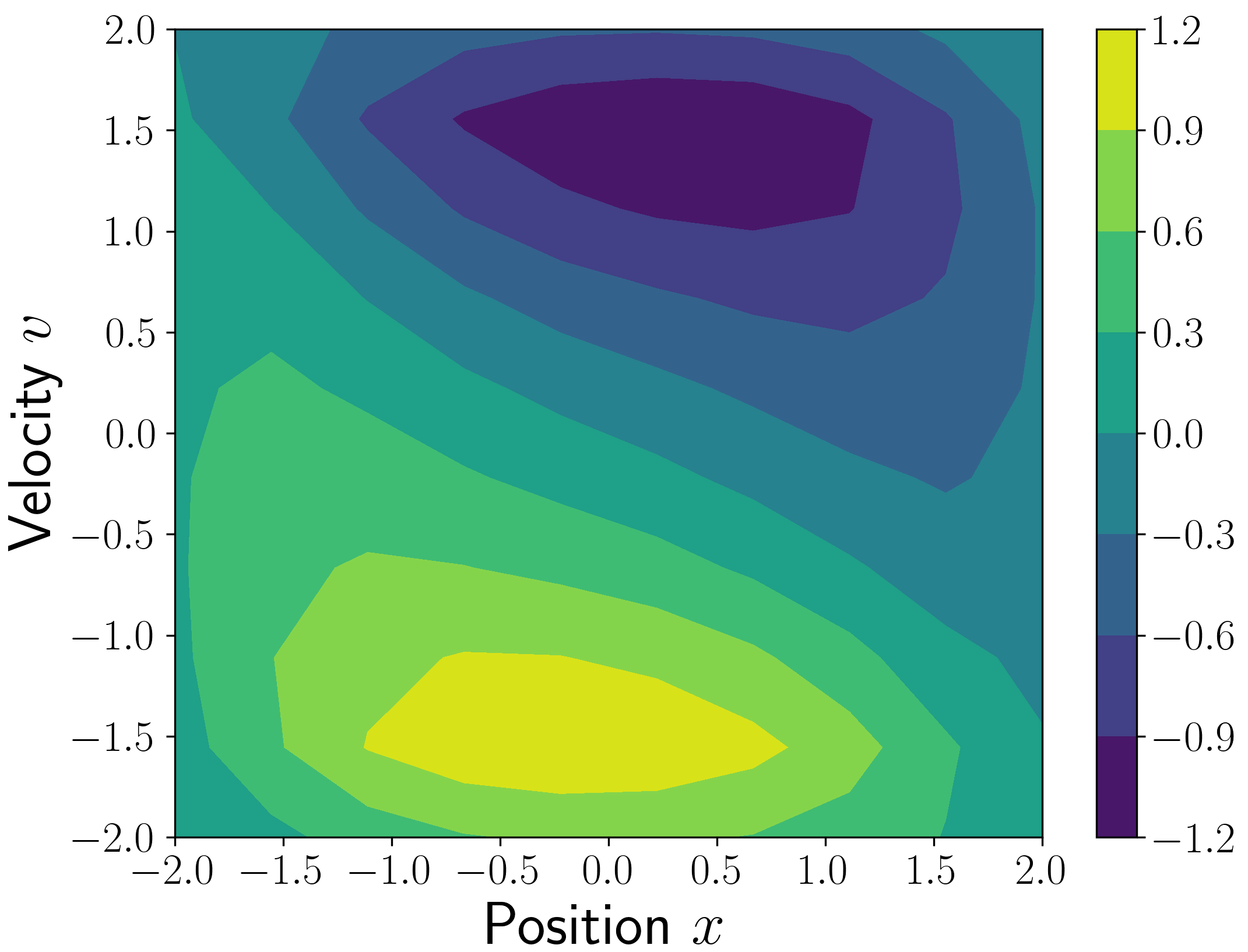}
		\caption{}
		\label{fig:Stabilization_control}
	\end{subfigure}
	\hfill
	\begin{subfigure}[l]{0.3\textwidth}
		\includegraphics[width=\textwidth]{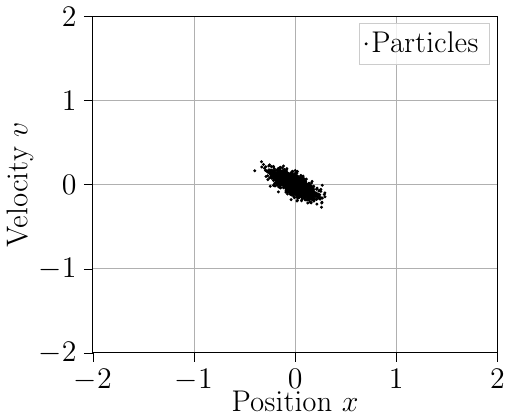}
		\caption{}
		\label{fig:Stabilization_final}
	\end{subfigure}
	\caption{Results of numerical experiments of stabilization with time-averaged control.
		(a) Initial configuration;
		(b) Contourplot of averaged $\bar{\bmu}$;
		(c) Final configuration obtained with averaged control $\bar{\bmu}$ without additional optimization process.}
	\label{fig:Stabilization}
\end{figure}

\subsection{Following a time-dependent trajectory}
\label{sec:TimeDependentTrajectory}

In the last test case, we search for an optimal control such that the particles follow a desired non-smooth time-dependent trajectory $\zd(t)$ for $t \in [0,T]$.
We choose 
$\zd(t) = (-\frac{\xmax}{2} + \frac{\xmax}{T}t, - |\frac{\vmax}{2T}t|+\frac{\vmax}{2})$.
Furthermore, we choose $\eta=0$ for this experiment and $\varepsilon_\varphi = 0.1$.
In particular, the particles will not feel a harmonic oscillator force, as this is not sensible for following a time-dependent trajectory that is not a circular motion.
We present the results of this test case in \cref{fig:Trajectory}, where in all plots the desired trajectory is plotted in red.
In \cref{fig:Initial_Final_Particles}, the initial configuration and final configurations are shown. In \cref{fig:MeanVar_PhaseSpace}, the desired trajectory and the resulting mean and variance of particles applying the optimal control are depicted in phase space, whereas in \cref{fig:MeanVar_PhaseSpace_Time} the mean and variance in position and velocity are plotted over time.
We see that the mean of the particles follows closely the prescribed desired trajectory. Since we cannot control the variance with out control mechanism, it grows as expected.

\begin{figure}[H]
	\begin{subfigure}[l]{0.3\textwidth}
		\includegraphics[width=\textwidth]{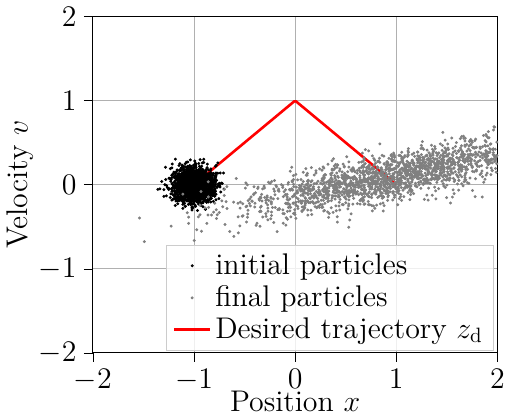}
		\caption{}
		\label{fig:Initial_Final_Particles}
	\end{subfigure}
	\hfill
	\begin{subfigure}[l]{0.3\textwidth}
		\includegraphics[width=\textwidth]{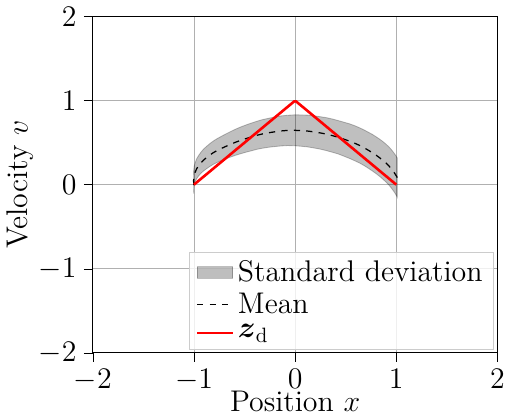}
		\caption{}
		\label{fig:MeanVar_PhaseSpace}
	\end{subfigure}
	\hfill
	\begin{subfigure}[l]{0.3\textwidth}
		\includegraphics[width=\textwidth]{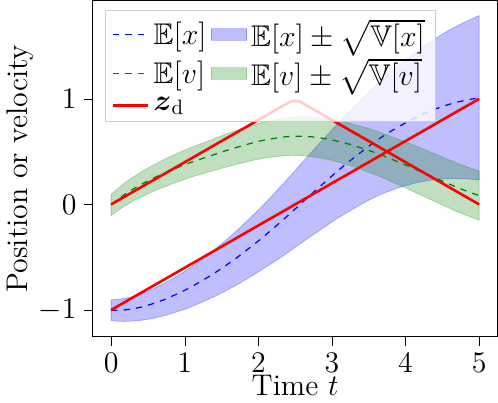}
		\caption{}
		\label{fig:MeanVar_PhaseSpace_Time}
	\end{subfigure}
	\caption{Results of numerical experiments with desired time-dependent trajectory (red).
		(a) Initial and final configuration;
		(b) Desired trajectory (red) and evolution of mean and standard deviation in phase space;
		(c) Desired trajectory (red) and evolution of mean and standard deviation over time.}
	\label{fig:Trajectory}
\end{figure}

\subsection{Interacting particles}
\label{sec:Coupling}

As a next example, we now consider a system of particles that interact with each other.
More in detail, we consider a system of $N$ particles, in which all nearest neighbours are coupled. For three particles with positions $x_i$, velocities $v_i$, $v_2$, $i=1,2,3$, and with positive parameters $\eta$ and $\omega$, we have the system:
\begin{align}
	\dot{x}_i = v_i
	\qquad\qquad
	\dot{v}_i = u(x_i,v_i,t) - \eta x_i - \omega\left(2x_i - \sum_{i=1,i\neq j}^3 x_i\right)
	\label{eq:interacting_model}
\end{align}

With this system of coupled oscillators, we can model in particular interacting phonons \cite{Koniakhin2023Phonons}.

As a test case, we initialize $N$ particles equidistantly distributed on an ellipse in phase space described by
\begin{align}
	\left( \frac{x}{A_x} \right)^2 + \left( \frac{v}{A_v} \right)^2 = 1,
	\label{eq:coupled_ellipse}
\end{align}
with given $A_x, A_v >0$.
The phonons should stay on this ellipse.
However, since we have diffusion and jumps in the process, the phonons leave this orbit.
With our control, we aim to keep the orbit as stable as possible.
For this reason, we implement also another functional $\Js_c$ for this test case with coupled particles:
\begin{align}
	\Js_{c}(z,t) = -\exp\left(-\frac{1}{2\sigma_\Js^2}\left(\left|\frac{x^2}{A_x^2}+\frac{v^2}{A_v^2}-1\right|^2\right)\right).
	\label{eq:phonon_functional_tracking}
\end{align}
This implements our desire to keep the particles on the ellipse described in \eqref{eq:coupled_ellipse}.

In the following figures, we present the results of this test case.
We set $A_x = 1.5$,  $A_v=1.7071067811865475$, and $\eta=1$, $\omega=0.5$.
\begin{figure}
	\begin{subfigure}[l]{0.49\textwidth}
		\centering
		\includegraphics[width=\textwidth]{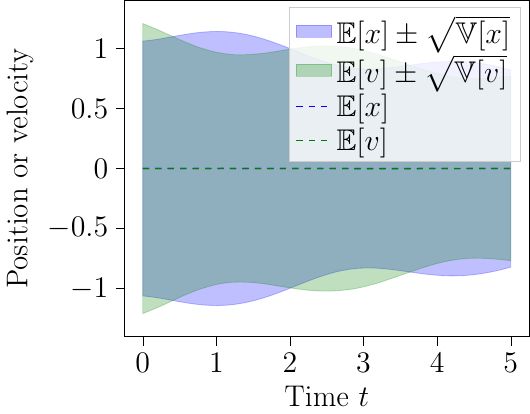}
		\caption{}
		\label{fig:uncontrolled_coupled}
	\end{subfigure}    
	\hfill
	\begin{subfigure}[l]{0.49\textwidth}
		\centering
		\includegraphics[width=\textwidth]{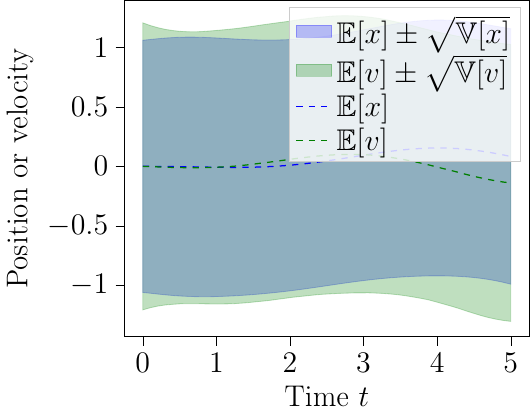}
		\caption{}
		\label{fig:controlled_coupled}
	\end{subfigure}    
	\caption{Results of experiment with coupled particles.
		(a): Uncontrolled case;
		(b): Applying optimized control mechanism.}
\end{figure}

\begin{figure}
	\begin{subfigure}[l]{0.49\textwidth}
		\centering
		\includegraphics[width=\textwidth]{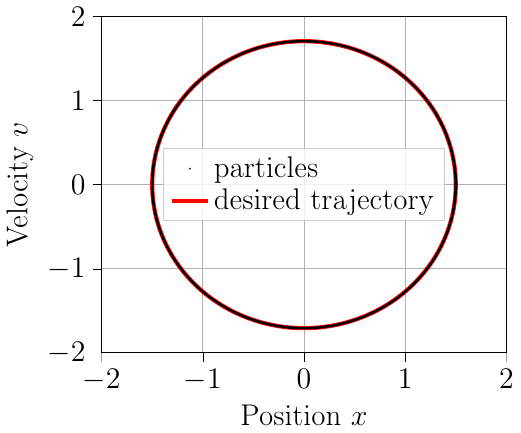}
		\caption{}
		\label{fig:initial_particles}
	\end{subfigure}
	\hfill
	\begin{subfigure}[l]{0.49\textwidth}
		\centering
		\includegraphics[width=\textwidth]{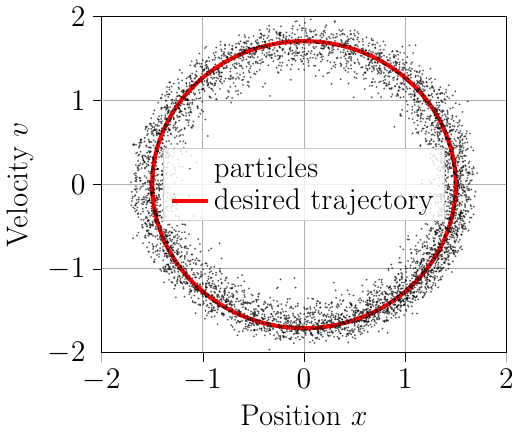}
		\caption{}
		\label{fig:terminal_particles}
	\end{subfigure}
	\caption{Behaviour of particles;
		(a): Initial particles on the desired ellipse;
		(b): final particles distributed on average on the ellipse.}
	\label{fig:particles_coupled}
\end{figure}

\begin{figure}
	\centering
	\begin{subfigure}[l]{0.49\textwidth}
		\includegraphics[width=\textwidth]{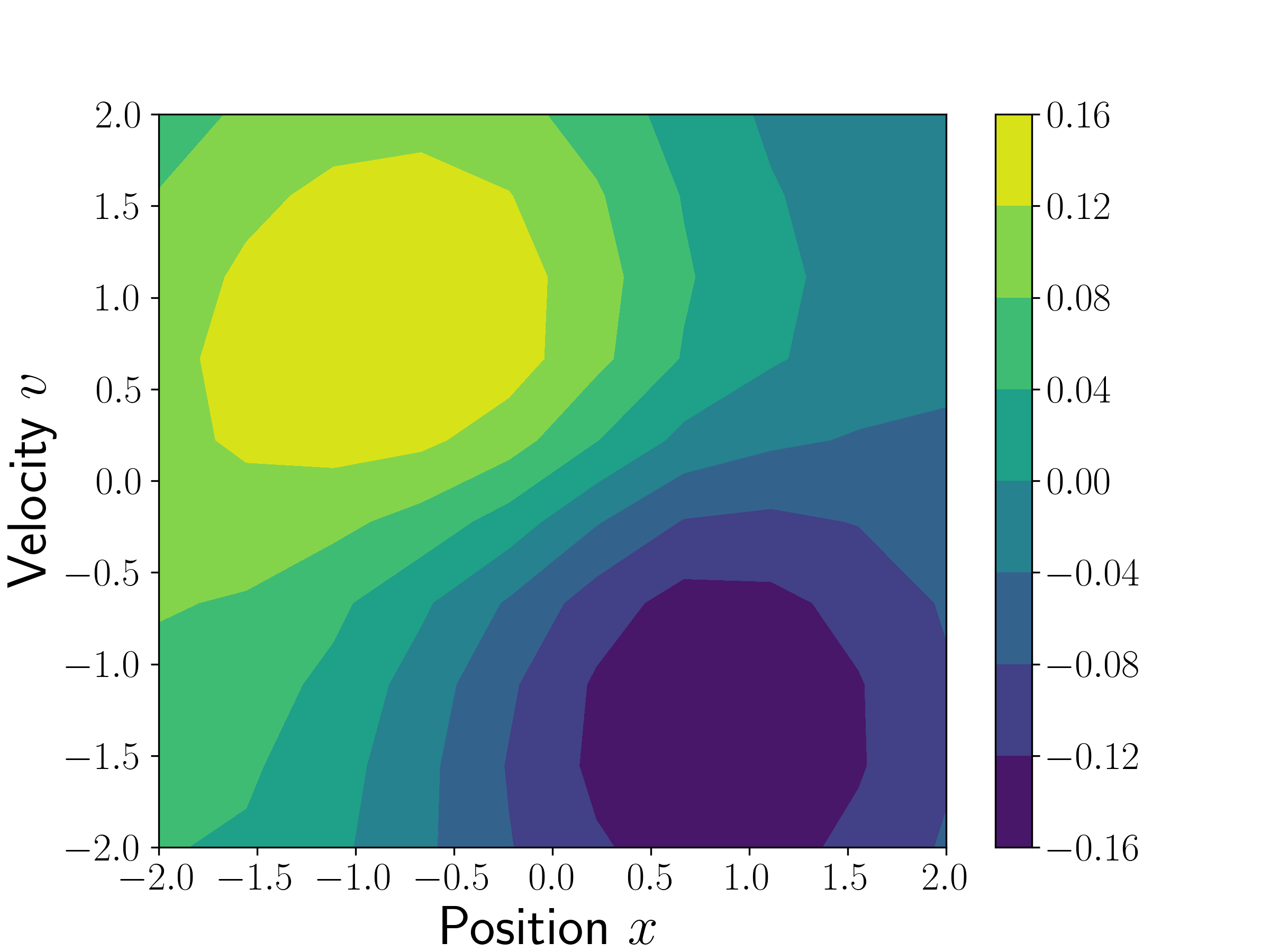}
		\caption{}
	\end{subfigure}
	\hfill
	\begin{subfigure}[r]{0.49\textwidth}
		\includegraphics[width=\textwidth]{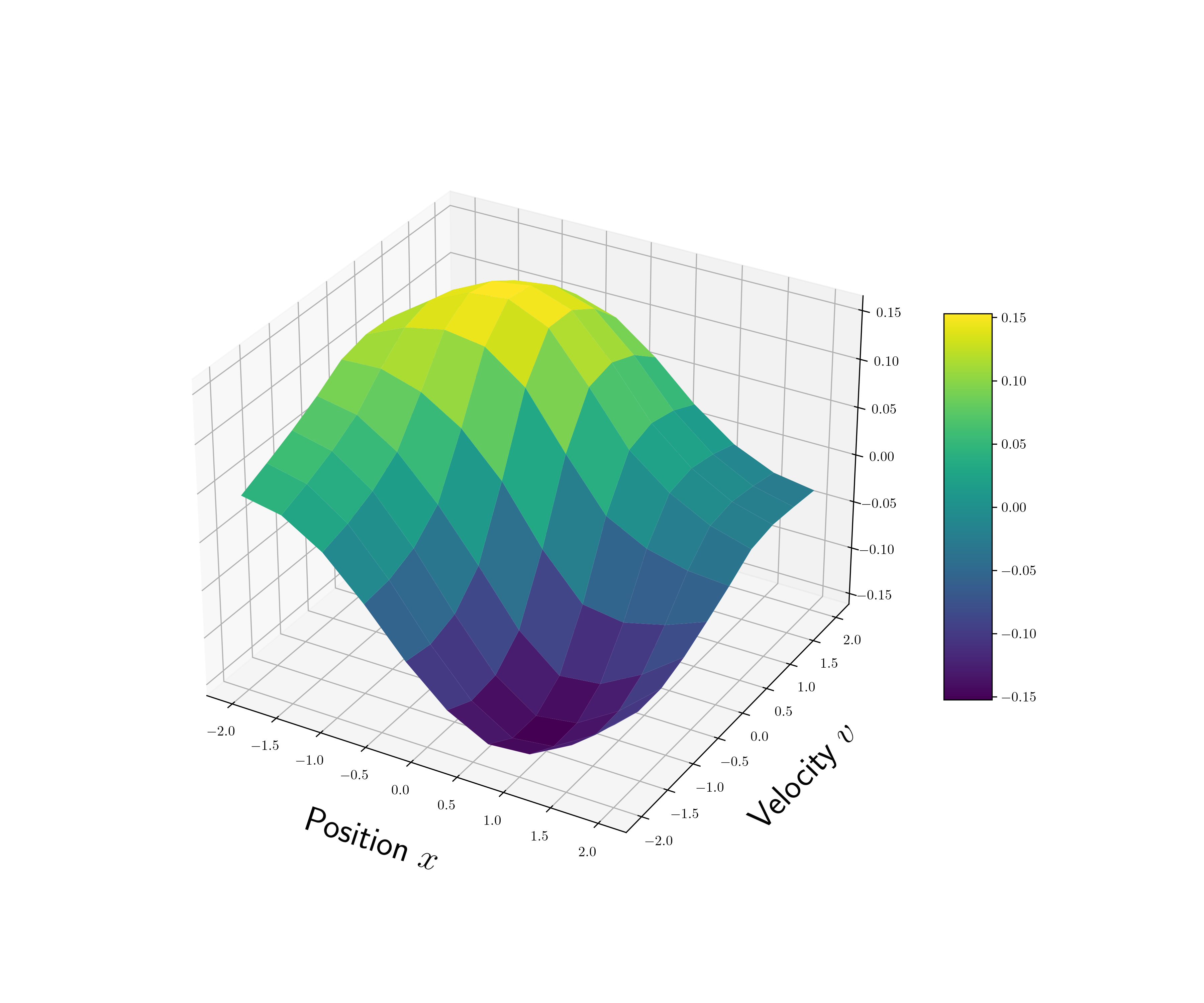}
		\caption{}
	\end{subfigure}
	\caption{Averaged (in time) optimal control;
	(a) Contour plot;
	(b) Surface plot}
	\label{fig:averaged_control}
\end{figure}

\newpage

\subsection{Adjoint equation independent of realization of forward trajectories}
\label{sec:AdjointIndependent}

In this section, we consider the same setting as in \cref{sec:TimeDependentTrajectory} but we do not take the solution of the forward model into account for the calculation of the solution to the adjoint model.
The main reason for this is that it is then possible to avoid the storing of the forward trajectories and hence save memory.
More specifically, it is possible to save the memory for $N \cdot N_t$ datapoints for the $N$ particles with $N_t$ timesteps.
Instead of using $\bz^h_j$, we use the (known) desired trajectory $\zd(t)$ and sample for each particle and realization a position and velocity according to $\mathcal{N}(\zd(t),\varsigma(t))$ with $\varsigma(t):[0,T] \rightarrow \Rb^+$ being the two-dimensional variance in phase space that mirrors the behavior of the variance of the forward model, i.e it growths linearly over time.

Notice, that in the generation of the gradient \eqref{eq:definition_Gradient}, we consider the forward trajectories for everything else expect the calculation of the adjoint variable $\br$ since this can be done during the calculation of the forward model.

In \cref{fig:Trajectory_independent}, we present the results of this test case. We observer a similar behavior as in the test case of \cref{sec:TimeDependentTrajectory} which stresses the effectiveness of our idea in this test case in order to save memory capacity.
When we compare the behavior of the convergence of the (relative) functional, we see that more steps are needed in order to reach a certain functional value compared the the case in which we take the exact forward trajectories (cf. \cref{fig:Comparison_Functional}).

\begin{figure}[H]
	\begin{subfigure}[l]{0.3\textwidth}
		\includegraphics[width=\textwidth]{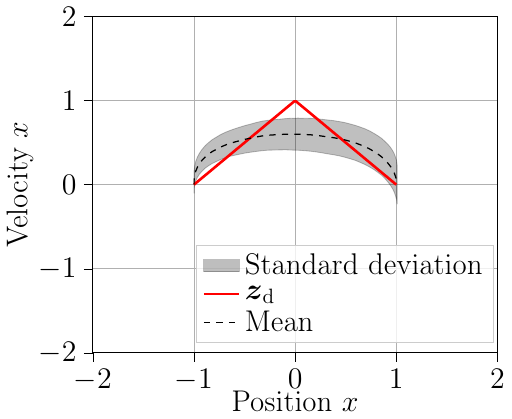}
		\caption{}
		\label{fig:MeanVar_PhaseSpace_independent}
	\end{subfigure}
	\hfill
	\begin{subfigure}[l]{0.3\textwidth}
		\includegraphics[width=\textwidth]{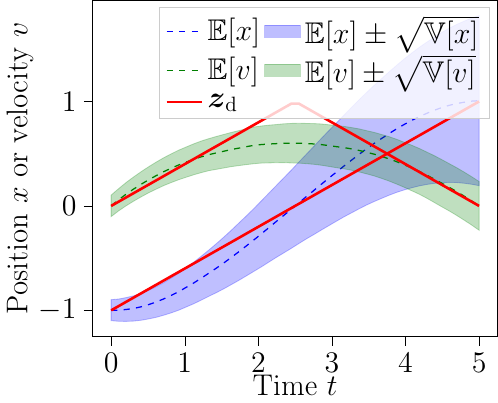}
		\caption{}
		\label{fig:MeanVar_PhaseSpace_TIME_independent}
	\end{subfigure}
	\hfill
	\begin{subfigure}[l]{0.3\textwidth}
		\includegraphics[width=\textwidth]{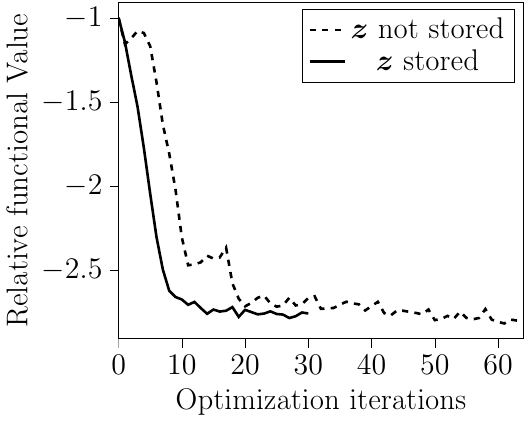}
		\caption{}
		\label{fig:Comparison_Functional}
	\end{subfigure}
	\caption{Results of numerical experiments with desired time-dependent trajectory (red).
		(a) Evolution of mean and standard deviation in phase space;
		(b) Evolution of mean and standard deviation for position and velocity in time;
		(c) Comparison of convergence of relative functional $\nicefrac{\hat{j}^h(u^\ell)}{\hat{j}^h(u^0)}$ over optimization iterations.}
	\label{fig:Trajectory_independent}
\end{figure}

\section{Conclusion}
\label{sec:Conclusion}

In this work, we developed and analyzed an adjoint-based optimization method for the control of systems governed by jump-diffusion processes, staying within the microscopic framework of stochastic differential equations. Our approach avoids the curse of dimensionality associated with macroscopic PDE-based methods and allows for highly parallelizable computations.

We derived the optimality system using a discretize-then-optimize approach and validated the theoretical results with Monte Carlo methods tailored to the microscopic context. Numerical experiments demonstrated the effectiveness of the proposed method in diverse scenarios, including centering particles, stabilization, and following time-dependent trajectories and using interacting and non-interacting particles. 
Moreover, we also presented a memory saving strategy that avoids storing the forward trajectories.
These results underscore the robustness and versatility of our approach in solving complex control problems in stochastic settings.

Future work could explore extensions to more complex jump processes including the control also in the diffusive and jump part. The integration of machine learning for enhanced control strategies also presents an interesting direction for further research.

\section*{Acknowledgments}
\noindent
This work was partially funded by the Deutsche Forschungsgemeinschaft within SFB 1432, Project-ID 425217212.
The authors acknowledge the Young Starting Fund of the University of Konstanz for partially funding J.R.

\bigskip

\paragraph*{\textbf{Declaration of generative AI and AI-assisted technologies in the writing process.}}

During the preparation of this work the authors used ChatGPT in order to improve the style of the writing. After using this tool, the authors reviewed and edited the content as needed and take full responsibility for the content of the published article.\\

\paragraph*{\textbf{Author contribution.}}
J.B. procured funding and led the project.
A.B. initiated the project and gave crucial ideas to improve the work.
G.C. initiated the project and worked together with J.B. to derive the theoretical results.
J.R. implemented the code and conducted numerical experiments under the supervision of J.B.
All authors edited and revised the manuscript.\\

\paragraph*{\textbf{Conflict of interest.}} The authors have no relevant financial or non-financial interests to disclose.

\bibliographystyle{acm}

\begin{thebibliography}{10}
	
	\bibitem{AhmedCharalambous2013StochMinJumpDiffusion}
	{\sc Ahmed, N.~U., and Charalambous, C.~D.}
	\newblock Stochastic minimum principle for partially observed systems subject
	to continuous and jump diffusion processes and driven by relaxed controls.
	\newblock {\em SIAM J. Control Optim. 51}, 4 (2013), 3235--3257.
	
	\bibitem{Atkinson1991introductionNumAna}
	{\sc Atkinson, K.}
	\newblock {\em An introduction to numerical analysis}.
	\newblock John wiley \& sons, 1991.
	
	\bibitem{Bao2019levyFilter_SDEJD}
	{\sc Bao, F.}
	\newblock {L}{\'e}vy backward {S}{D}{E} filter for jump diffusion processes and
	its applications in material sciences.
	\newblock {\em Commun. Comput. Phys. 27}, 2 (2019).
	
	\bibitem{BarthHerbinOhlberger2018FiniteVolume}
	{\sc Barth, T., Herbin, R., and Ohlberger, M.}
	\newblock Finite volume methods: foundation and analysis.
	\newblock {\em Encyclopedia of Computational Mechanics Second Edition\/}
	(2018), 1--60.
	
	\bibitem{Bartsch2021MOCOKI}
	{\sc Bartsch, J., and Borz\`\i, A.}
	\newblock M{OCOKI}: {A} {M}onte {C}arlo approach for optimal control in the
	force of a linear kinetic model.
	\newblock {\em Comput. Phys. Commun. 266\/} (2021), 108030.
	
	\bibitem{BartschBorzi2024MOCOKIFeedbackStabilization}
	{\sc Bartsch, J., and Borz{\`{i}}, A.}
	\newblock On the stabilization of a kinetic model by feedback-like control
	fields in a {M}onte {C}arlo framework.
	\newblock {\em Kinet. Relat. Models\/} (2024).
	
	\bibitem{Bartsch2019Theoretical}
	{\sc Bartsch, J., Borz\`\i, A., Fanelli, F., and Roy, S.}
	\newblock A theoretical investigation of {B}rockett's ensemble optimal control
	problems.
	\newblock {\em Calc. Var. Partial Differential Equations 58}, 5 (2019), Paper
	No. 162, 34.
	
	\bibitem{Bartsch2021Numerical}
	{\sc Bartsch, J., Borz\`\i, A., Fanelli, F., and Roy, S.}
	\newblock A numerical investigation of {B}rockett's ensemble optimal control
	problems.
	\newblock {\em Numer. Math. 149}, 1 (2021), 1--42.
	
	\bibitem{BartschDenkVolkwein2023adjointCalibrationSDE}
	{\sc Bartsch, J., Denk, R., and Volkwein, S.}
	\newblock Adjoint-based calibration of nonlinear stochastic differential
	equations.
	\newblock {\em Appl. Math. Optim.\/} (2024).
	
	\bibitem{BartschKnopfScheurerWeber2023controllingVPMC}
	{\sc Bartsch, J., Knopf, P., Scheurer, S., and Weber, J.}
	\newblock Controlling a {V}lasov--{P}oisson {P}lasma by a {P}article-in-{C}ell
	{M}ethod {B}ased on a {M}onte {C}arlo {F}ramework.
	\newblock {\em SIAM J. Control Optim. 62}, 4 (2024), 1977--2011.
	
	\bibitem{Bartsch2020OCPKS}
	{\sc Bartsch, J., Nastasi, G., and Borz\`{i}, A.}
	\newblock Optimal control of the {K}eilson-{S}torer master equation in a
	{M}onte {C}arlo framework.
	\newblock {\em J. Comput. Theor. Transp. 50}, 5 (2021), 454--482.
	
	\bibitem{BGK1954}
	{\sc Bhatnagar, P.~L., Gross, E.~P., and Krook, M.}
	\newblock A model for collision processes in gases. i. small amplitude
	processes in charged and neutral one-component systems.
	\newblock {\em Phys. Rev. 94\/} (May 1954), 511--525.
	
	\bibitem{Billingsley1999ConvergenceProbMeasure}
	{\sc Billingsley, P.}
	\newblock {\em Convergence of probability measures}, second~ed.
	\newblock Wiley Series in Probability and Statistics: Probability and
	Statistics. John Wiley \& Sons, Inc., New York, 1999.
	\newblock A Wiley-Interscience Publication.
	
	\bibitem{Bismut1978IntroOCPStoch}
	{\sc Bismut, J.-M.}
	\newblock An introductory approach to duality in optimal stochastic control.
	\newblock {\em SIAM Rev. 20}, 1 (1978), 62--78.
	
	\bibitem{BrutiPlaten2007approximation}
	{\sc Bruti-Liberati, N., and Platen, E.}
	\newblock Approximation of jump diffusions in finance and economics.
	\newblock {\em Comput. Econ. 29\/} (2007), 283--312.
	
	\bibitem{Buckdahn2010existenceStochasticControl}
	{\sc Buckdahn, R., Labed, B., Rainer, C., and Tamer, L.}
	\newblock Existence of an optimal control for stochastic control systems with
	nonlinear cost functional.
	\newblock {\em Stochastics 82}, 3 (2010), 241--256.
	
	\bibitem{Buhmann2003RadialBasisFunctions}
	{\sc Buhmann, M.~D.}
	\newblock {\em Radial basis functions: theory and implementations}, vol.~12 of
	{\em Cambridge Monographs on Applied and Computational Mathematics}.
	\newblock Cambridge University Press, Cambridge, 2003.
	
	\bibitem{DelCastilloCarreras2004fractional}
	{\sc del Castillo-Negrete, D., Carreras, B., and Lynch, V.}
	\newblock Fractional diffusion in plasma turbulence.
	\newblock {\em Phys. Plasmas 11}, 8 (2004), 3854--3864.
	
	\bibitem{Dvinskikh2020LinesearchStochastic}
	{\sc Dvinskikh, D., Ogaltsov, A., Gasnikov, A., Dvurechensky, P., and Spokoiny,
		V.}
	\newblock On the line-search gradient methods for stochastic optimization.
	\newblock {\em IFAC-PapersOnLine 53}, 2 (2020), 1715--1720.
	
	\bibitem{FramstadOksendal2004SufficientMaximumPrinciple}
	{\sc Framstad, N.~C., {\O}ksendal, B., and Sulem, A.}
	\newblock Sufficient stochastic maximum principle for the optimal control of
	jump diffusions and applications to finance.
	\newblock {\em J. Optim. Theory Appl. 121}, 1 (2004), 77--98.
	
	\bibitem{GaviraghiBorzi2016OCP_JD}
	{\sc Gaviraghi, B., Schindele, A., Annunziato, M., and Borz{\`{i}}, A.}
	\newblock On optimal sparse-control problems governed by jump-diffusion
	processes.
	\newblock {\em Appl. Math. 7}, 16 (2016), 1978--2004.
	
	\bibitem{Grenander1994KnowledgeComplexSystems}
	{\sc Grenander, U., and Miller, M.~I.}
	\newblock Representations of knowledge in complex systems.
	\newblock {\em J. R. Stat. Soc. Ser. B Methodol. 56}, 4 (1994), 549--581.
	
	\bibitem{GrinsteadSnell1997IntroductionProbability}
	{\sc Grinstead, C.~M., and Snell, J.~L.}
	\newblock {\em Introduction to probability}.
	\newblock American Mathematical Soc., 1997.
	
	\bibitem{Higham2019DeepLearning}
	{\sc Higham, C.~F., and Higham, D.~J.}
	\newblock Deep learning: an introduction for applied mathematicians.
	\newblock {\em SIAM Rev. 61}, 4 (2019), 860--891.
	
	\bibitem{HighamKloeden2006ConvergenceStabilityJDP}
	{\sc Higham, D.~J., and Kloeden, P.~E.}
	\newblock Convergence and stability of implicit methods for jump-diffusion
	systems.
	\newblock {\em Int. J. Numer. Anal. Model. 3}, 2 (2006), 125--140.
	
	\bibitem{HighamKloeden2021NumSDE}
	{\sc Higham, D.~J., and Kloeden, P.~E.}
	\newblock {\em An introduction to the numerical simulation of stochastic
		differential equations}.
	\newblock Society for Industrial and Applied Mathematics (SIAM), Philadelphia,
	PA, 2021.
	
	\bibitem{Jacoboni1983}
	{\sc Jacoboni, C., and Reggiani, L.}
	\newblock The {M}onte {C}arlo method for the solution of charge transport in
	semiconductors with applications to covalent materials.
	\newblock {\em Rev. Mod. Phys. 55}, 3 (1983), 645--705.
	
	\bibitem{Jain2021microStochPrandtlTomlison}
	{\sc {Jain}, R., {Ginot}, F., and {Kr{\"u}ger}, M.}
	\newblock {Micro-rheology of a particle in a nonlinear bath: Stochastic
		Prandtl-Tomlinson model}.
	\newblock {\em Phys. Fluids 33}, 10 (Oct. 2021), 103101.
	
	\bibitem{KeilsonStorer1952KS}
	{\sc Keilson, J., and Storer, J.~E.}
	\newblock On {B}rownian motion, {B}oltzmann's equation, and the
	{F}okker-{P}lanck equation.
	\newblock {\em Quart. Appl. Math. 10\/} (1952), 243--253.
	
	\bibitem{Koniakhin2023Phonons}
	{\sc {Koniakhin}, S.~V., {Utesov}, O.~I., and {Yashenkin}, A.~G.}
	\newblock {Coupled-oscillator model for hybridized optical phonon modes in
		contacting nanosize particles and quantum dot molecules}.
	\newblock {\em Phys. Rev. Res. 5}, 1 (Feb. 2023), 013153.
	
	\bibitem{KushnerDiMasi1978OCP_JD}
	{\sc Kushner, H.~J., and Di~Masi, G.}
	\newblock Approximations for functionals and optimal control problems on jump
	diffusion processes.
	\newblock {\em J. Math. Anal. Appl. 63}, 3 (1978), 772--800.
	
	\bibitem{Kushner2001NumMeth_StOCP}
	{\sc Kushner, H.~J., and Dupuis, P.}
	\newblock {\em Numerical methods for stochastic control problems in continuous
		time}, second~ed., vol.~24 of {\em Applications of Mathematics (New York)}.
	\newblock Springer-Verlag, New York, 2001.
	\newblock Stochastic Modelling and Applied Probability.
	
	\bibitem{Mao2008SDE}
	{\sc Mao, X.}
	\newblock {\em Stochastic differential equations and applications}, second~ed.
	\newblock Horwood Publishing Limited, Chichester, 2008.
	
	\bibitem{Maruyama1955SDE}
	{\sc Maruyama, G.}
	\newblock Continuous {M}arkov processes and stochastic equations.
	\newblock {\em Rend. Circ. Mat. Palermo (2) 4\/} (1955), 48--90.
	
	\bibitem{PlatenBruti2010NumSolSDEFinance}
	{\sc Platen, E., and Bruti-Liberati, N.}
	\newblock {\em Numerical solution of stochastic differential equations with
		jumps in finance}, vol.~64 of {\em Stochastic Modelling and Applied
		Probability}.
	\newblock Springer-Verlag, Berlin, 2010.
	
	\bibitem{SachsSchu213GradienComputation_ModelCalibration}
	{\sc Sachs, E.~W., and Schu, M.}
	\newblock Gradient computation for model calibration with pointwise
	observations.
	\newblock In {\em Control and optimization with {PDE} constraints}, vol.~164 of
	{\em Internat. Ser. Numer. Math.} Birkh\"{a}user/Springer Basel AG, Basel,
	2013, pp.~117--136.
	
	\bibitem{Sobczyk2013SDEs}
	{\sc Sobczyk, K.}
	\newblock {\em Stochastic differential equations: with applications to physics
		and engineering}, vol.~40.
	\newblock Springer Science \& Business Media, 2013.
	
	\bibitem{Treoltzsch2010OCP_PDE}
	{\sc Tr{\"o}ltzsch, F.}
	\newblock {\em Optimal control of partial differential equations: Theory,
		methods, and applications}, vol.~112 of {\em Graduate Studies in
		Mathematics}.
	\newblock American Mathematical Society, Providence, RI, Providence, RI, 2010.
	
	\bibitem{Vaswani2019painlessSGDLinesearch}
	{\sc Vaswani, S., Mishkin, A., Laradji, I., Schmidt, M., Gidel, G., and
		Lacoste-Julien, S.}
	\newblock Painless stochastic gradient: Interpolation, line-search, and
	convergence rates.
	\newblock {\em Adv. Neural. Inf. Process. Syst. 32\/} (2019).
	
	\bibitem{YongZhou1999StochasticControls}
	{\sc Yong, J., and Zhou, X.~Y.}
	\newblock {\em Stochastic controls}, vol.~43 of {\em Applications of
		Mathematics (New York)}.
	\newblock Springer-Verlag, New York, 1999.
	\newblock Hamiltonian systems and HJB equations.
	
\end{thebibliography}

\end{document}